\newcommand\norm[1]{\left\lVert#1\right\rVert}
\newcommand{\0}{{\mathbf{0}}}
    \renewcommand{\a}{{\mathbf{a}}}
\newcommand{\packB}{{\mathcal{B}}}
\newcommand{\packC}{{\mathbb{C}}}
\newcommand{\packF}{{\mathbb{F}}}
\newcommand{\FF}{{\mathcal{F}}}
\newcommand{\ip}{{\mathfrak{p}}}
\newcommand{\iP}{{\mathfrak{P}}}
\newcommand{\LP}{{\mathcal{P}}}
    \renewcommand{\mod}{\ \mathrm{mod}\ }
\newcommand{\N}{{\mathbb{N}}}
    \renewcommand{\O}{{\mathcal{O}}}
\newcommand{\p}{{\mathbf{p}}}
\newcommand{\packQ}{{\mathbb{Q}}}
\newcommand{\R}{{\mathbb{R}}}
\newcommand{\vol}{{\text{vol}}}
    \newcommand{\bv}{{\mathbf{v}}}
\newcommand{\Z}{{\mathbb{Z}}}
\newcommand{\Magma}{{\sf Magma}}
\newcommand{\Tr}{\textnormal{Tr}}
\newcommand{\Nor}{\textnormal{N}}
    \newtheorem{theorem}{Theorem}
    \newtheorem{theorem}{Theorem}[section]
    \newtheorem{corollary}[theorem]{Corollary}
    \newtheorem{example}[theorem]{Example}
    \newtheorem{lemma}[theorem]{Lemma}
    \newtheorem{proposition}[theorem]{Proposition}
    \newtheorem{remark}[theorem]{Remark}
\begin{document}


{\begin{center}\Large\bf Dense Packings from Algebraic Number Fields and Codes\footnote{This research began when the author was a PhD candidate at Nanyang Technological University.}\end{center}}{}


{\begin{center} {\bf Shantian Cheng}\\Risk Management Institute,\\ National University of Singapore,\\ 21 Heng Mui Keng Terrace,\\
 119613 Singapore\,
\\
{rmicst@nus.edu.sg}
\end{center}
}

%


 {\begin{center}
\parbox{14.5cm}{\begin{abstract}
 We introduce a new method from number fields and codes to construct dense packings in the Euclidean spaces. Via the canonical $\mathbb{Q}$-embedding of arbitrary number field $K$ into $\mathbb{R}^{[K:\mathbb{Q}]}$, both the prime ideal $\mathfrak{p}$ and its residue field $\kappa$ can be embedded as discrete subsets in $\mathbb{R}^{[K:\mathbb{Q}]}$. Thus we can concatenate the embedding image of the Cartesian product of $n$ copies of $\mathfrak{p}$ together with the image of a length $n$ code over $\kappa$. This concatenation leads to a packing in the Euclidean space $\mathbb{R}^{n[K:\mathbb{Q}]}$. Moreover, we extend the single concatenation to multiple concatenation to obtain dense packings and asymptotically good packing families. For instance, with the help of {\sf Magma}{}, we construct a $256$-dimensional packing denser than the Barnes-Wall lattice BW$_{256}$.

\noindent {\bf Keywords}: {Dense packings, Number fields, Minkowski lattice, Codes}

\noindent {\bf MSC}: {11H31, 52C17, 11H71, 11H06, 11R04}

\end{abstract}}\end{center}}


\renewcommand{\baselinestretch}{1.2}

\section{Introduction}




The classical problem of packing non-overlapping equal spheres densely in an $n$-dimensional Euclidean space has attracted the interest of numerous mathematicians for centuries.
Many methods and results from different disciplines, such as discrete geometry,
combinatorics, number theory and coding theory, etc. have been involved in
this problem, while some explicit fascinating dense constructions and asymptotically
good packing families have been found. For a detailed survey on the development in
this field, the reader may refer to the books of Cassels~\cite{cassels1997introduction}, Conway and Sloane~\cite{sloane1999sphere}, Zong~\cite{zong1999sphere}.
If the centers of the packed spheres form a discrete additive subgroup of $\R^n$ (lattice), we call it a lattice packing.

Sphere packings are continuous analogues of error-correcting codes in the Hamming
space \cite{elkies2000lattices}. The basic problem of error-correcting codes is to seek the maximum
size of a code given length, alphabet size and minimal Hamming distance, in other
words, dense packing of points in Hamming space such that each pair of distinct
points is separated at least by the minimum Hamming distance. In the digital
world, error-correcting codes are widely employed in information storage and transmission,
for example, the blue ray storage format and the communication between space stations and the Earth. Based on the similarities between sphere packings
and error-correcting codes, the results in sphere packings can potentially contribute
to the development of error-correcting codes.

Meanwhile, some constructions of dense lattice or non-lattice packings are inspired
by constructions in coding theory. For example, similar to concatenated codes, Leech and Sloane's ``Construction A'' method concatenated certain binary codes together with $2\cdot \Z^n$ to construct new lattice packings (see details in \cite[Chapter 5]{sloane1999sphere}). Bachoc \cite{Bachoc199792} generalized the method to construct modular lattices using codes over finite involution algebras.
In Construction C \cite[Chapter 5]{zong1999sphere}, the binary expansion of the coordinates in $\Z^n$ is considered. A point is a packing center if and only if its first $\ell$ coordinate arrays are codewords in certain $\ell$ binary codes respectively.

Let $\omega=\frac{-1+\sqrt{-3}}{2}$. Xing \cite{xing2008dense} further investigated the concatenating method. Instead of packings in $\Z^n$, he considered the packings in $\O_K^n$, where $\O_K=\Z\left[\omega\right]$ denotes the ring of integers in the number field $\packQ(\sqrt{-3})$. That is, for a non-zero prime ideal $\iP$ of $\O_K$, $\iP^n$ can be embedded as a lattice $L$ in $\R^{2n}$ via the canonical $\packQ$-embedding of the special number field $\packQ(\sqrt{-3})$ into $\R^2$.
Hence any subset $\LP$ of $\iP^n$ can be regarded as a packing in $\R^{2n}$. Then he replaced the binary expansion in Construction C by the $\iP$-adic expansion, and concatenated the lattice $L$ with some special codes over $\O_K/\iP$.
 This method produces several dense  packings in low dimensions attaining the best-known densities
  and also obtains an unconditional bound on the asymptotic density exponent $\lambda\geqslant -1.265$
  (see \cite{xing2008dense}). Cheng \cite{chengFFA} applied this concatenation to multiplicative lattices and improved the asymptotic density of
packing families derived from multiplicative lattices. One natural question is whether
we can use arbitrary number field instead of the special quadratic number field $\packQ(\sqrt{-3})$ to generalize the constructing method.

In this paper, we extend Xing's method to a general level, i.e. we employ the ideals in $\O_K$ instead of $\Z[\omega]$, where $\O_K$ denotes the ring of integers in an arbitrary number field $K$. Suppose the extension degree of $K$ over $\packQ$ is $m$. Minkowski interpreted the elements in $K$ as points in the $m$-dimensional Euclidean space $\R^m$. The interpretation is called ``Minkowski Theory'' in algebraic number theory  (see \cite[Section I.5]{neukirch1999algebraic} or \cite[Section 5.3]{tsfasman1991algebraic}).

By Minkowski's interpretation, we can use the canonical $\packQ$-embedding to construct lattices in $\R^m$ from ideals in $\O_K$. In this way, for a non-zero prime ideal $\ip$ of $\O_K$, the Cartesian product of $n$ copies of $\ip$ can be embedded as a lattice in $\R^{mn}$. Meanwhile, the codes defined over $\O_K/\ip$ with length $n$ can also be embedded as finite subsets of $\R^{mn}$. Then we can proceed with the concatenating method as Construction A on these two kinds of subsets of $\R^{mn}$ to construct dense packings. For instance, we construct a $256$-dimensional packing with center density $\delta$ satisfying $\log_2\delta\geqslant 208.09$, which is larger than $192$ of the Barnes-Wall lattice BW$_{256}$ (see the table of dense packings in \cite[Table 1.3]{sloane1999sphere}). Furthermore, for different choices of
number fields and prime ideals, we also provide several asymptotically good packing
families.


In section \ref{preliminaries}, we recall the formal definitions and necessary properties of
sphere packing densities, algebraic number fields and codes. They play important roles in the main results.
In section \ref{main result}, we describe our
generalized concatenating method in detail. Several examples of dense packings and asymptotically good packing
families for different choices of number fields and prime ideals are provided.
The detailed numerical results are provided in Tables \ref{table} - \ref{table4}. In Section \ref{conclusion}, we conclude our main contribution.


\section{Preliminaries}\label{preliminaries}

\subsection{Sphere Packing}

Let $\LP$ be the set of centers of packed spheres and $\packB_N(R)$ be the set
\begin{eqnarray*}
  \left\{\bv=\left(a_1,\cdots,a_N\right)\in \R^N: \norm{\bv}=\sqrt{a_1^2+\cdots+a_N^2}\leqslant R\right\},
\end{eqnarray*}
where $\norm{\bv}$ denotes the Euclidean length of vector $\bv$.
As a sphere packing construction is determined by the arrangement of the sphere centers, we just use $\LP$ to denote the corresponding packing.

\smallskip

For a packing $\LP$, the radius of the equal packed spheres is $r=d_E(\LP)/2$, where $d_E(\LP)$ is the minimum Euclidean distance between two distinct points in $\LP$.
Then the density $\Delta(\LP)$ of packing $\LP$ is defined as
\begin{eqnarray*}
  \Delta(\LP)=\limsup_{R\rightarrow \infty}\dfrac{|\LP\cap \packB_N(R)|\cdot r^N\cdot V_N}{\vol(\packB_N(R+r))},
\end{eqnarray*}
where $V_N$ is the volume of the unit sphere in $\R^N$, that is
\begin{eqnarray*}
  V_N=\begin{cases}
    \dfrac{\pi^{N/2}}{\left(N/2\right)!},&\text{if $N$ is even;}\\
   \dfrac{2^N\pi^{\left(N-1\right)/2}\left(\left(N-1\right)/2\right)!}{N!},& \text{if $N$ is odd.}
  \end{cases}
\end{eqnarray*}

 The sphere packing problem is to construct packings obtaining large density $\Delta(\LP)$. Moreover, the center density $\delta(\LP)$ and density exponent $\lambda(\LP)$ are defined respectively as
\[\delta(\LP)=\dfrac{\Delta(\LP)}{V_N},\qquad \lambda(\LP)=\dfrac 1N \log_2 \Delta(\LP).\]

If $\LP=L$ forms a lattice, the density of lattice packing $L$ can be simplified as
\begin{eqnarray*}
  \Delta(L)=\dfrac{(d_E(L)/2)^NV_N}{\det(L)},
\end{eqnarray*}
where $\det(L)$ is the determinant of $L$. Note that from Minkowski's Convex Body Theorem (see \cite[Theorem 1.4]{MGbook} or \cite[Theorem 4]{DBLP:series/isc/Nguyen10}), for any rank $N$ lattice $L$, the minimum Euclidean distance of $L$ satisfies
\begin{eqnarray}\label{eqn:upper bound of lattice niminum}
  d_E(L)\leqslant \sqrt{N}\det(L)^{1/N}.
\end{eqnarray}

When we explore the asymptotic behavior of a packing family $\FF=\left\{\LP^{(N)}\right\}$ as the dimension $N$ of packing $\LP^{(N)}$ tends to $\infty$, we consider the asymptotic density exponent of the family
\begin{eqnarray}\label{eqn:asymptotic density bound}
  \lambda(\FF)=\limsup_{N\rightarrow \infty}\dfrac 1N \log_2 \Delta\left(\LP^{(N)}\right).
\end{eqnarray}
Note that by Stirling formula, as $N\rightarrow \infty$, we have
\begin{eqnarray}\label{Stirling}
  \log_2 V_N=-\dfrac N2\log_2\dfrac {N}{2\pi e}-\dfrac 12\log_2(N\pi)-\epsilon,
\end{eqnarray}
where $0<\epsilon<(\log_2 e)/(6N)$.


For the asymptotic aspect, Minkowski gave a nonconstructive bound that asserts that there exists some packing family $\FF$ such that the asymptotic density exponent $\lambda(\FF)$ satisfies $\lambda(\FF)\geqslant -1$ (see \cite[p.184]{cassels1997introduction}). It is of interest to construct families with $\lambda(\FF)>-\infty$ explicitly (such families are called {\em asymptotically good}). Known constructive bounds for families with polynomial or exponential construction complexity in terms of dimension $N$ are listed in the book of Litsyn and Tsfasman \cite[p.628]{tsfasman1991algebraic}.
Rush \cite{RushRD} proved that Minkowski's bound on asymptotic density $\lambda\geqslant -1$ can be attained by lattice packings constructed from codes. More recently, Gaborit and Z\'{e}mor \cite{GZRD} improved the density by a linear factor to the quantity of the form $cn2^{-n}$ for constant $c$.


\subsection{Algebraic Number Fields}

Let $K$ be an algebraic number field of degree $m$ over $\packQ$, and let $\O_K$ be its ring of integers. Suppose $K$ has $s$ real embeddings
\[\rho_1,\cdots,\rho_s:K\hookrightarrow \R,\]
and $t$ pairs of complex conjugate embeddings
\[\sigma_1,\overline{\sigma}_1,\cdots,\sigma_t,\overline{\sigma}_t:K\hookrightarrow \packC.\]
Thus $m=s+2t$.
We consider the canonical embedding $\tau:K\hookrightarrow \R^{s+2t}$, that is, for any $\alpha\in K$, $\tau(\alpha)$ is the following vector
 \begin{eqnarray}
   \quad \left(\rho_1(\alpha),\cdots,\rho_s(\alpha),\sqrt{2}\Re\sigma_1(\alpha),\sqrt{2}\Im\sigma_1(\alpha),\cdots,\sqrt{2}\Re\sigma_t(\alpha),\sqrt{2}\Im\sigma_t(\alpha)\right), \label{tau}
 \end{eqnarray}where $\Re$ denotes the real part and $\Im$ denotes the imaginary part of a complex number. $\tau$ can be directly extended to $K^n\hookrightarrow\R^{mn}$, which is also denoted by $\tau$ in this paper without
causing any confusion.

 Note that in the software \Magma{} V2.21-4 \cite{MR1484478,MS}, the embedding $\tau$ defined above is called the Minkowski map and it is employed to compute the ``minimum norm'' (square of minimum Euclidean distance) of a Minkowski lattice. In Subsection \ref{examples}, we will use the software to compute the density of some explicit examples from our construction.

Set $\Tr=\text{Trace}_{K/\packQ}$ and $\Nor=\text{Norm}_{K/\packQ}$. By the Minkowski Theory \cite[Section I.5]{neukirch1999algebraic}, the embedding $\tau$ maps the non-zero ideals of $\O_K$ to some special lattices in $\R^m$. The following lemma characterizes the determinant of such lattices.
\begin{lemma}[{\cite[Section I.5]{neukirch1999algebraic}}]\label{Lem:L_I}
For any non-zero ideal $I\subseteq \O_K$, $L_I:=\tau(I)$ is a lattice of rank $m$.
The determinant of $L_I$ is
 \[\det L_I=\mathcal{N}(I)\sqrt{|D_K|},\]
 where $D_K$ is the discriminant of $K$ and $\mathcal{N}(I)=[\O_K:I]$ is the absolute norm of $I$.
\end{lemma}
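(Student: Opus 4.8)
The statement is classical Minkowski theory, and indeed the cited reference \cite[Section I.5]{neukirch1999algebraic} contains it; still, here is how I would organize the argument. The plan is to reduce the general claim to the case $I=\O_K$ and then invoke the index formula for finite-index sublattices. First I would record that $\tau:K\to\R^m$ is $\packQ$-linear — it is assembled from the field embeddings followed by the $\R$-linear maps $z\mapsto\sqrt{2}\,\Re z$ and $z\mapsto\sqrt{2}\,\Im z$ — and that it is injective, since $\tau(\alpha)=\0$ forces $\rho_1(\alpha)=0$ and hence $\alpha=0$. Consequently, fixing an integral basis $\omega_1,\dots,\omega_m$ of $\O_K$, the image $L_{\O_K}=\tau(\O_K)$ is the $\Z$-span of $\tau(\omega_1),\dots,\tau(\omega_m)$, and $\tau$ identifies the abelian group $\O_K$ with $L_{\O_K}$.

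The core step is to show that $L_{\O_K}$ is a rank-$m$ lattice with $\det L_{\O_K}=\sqrt{|D_K|}$. Let $M$ be the $m\times m$ real matrix whose $j$-th row is $\tau(\omega_j)$, and let $A$ be the $m\times m$ complex matrix whose $j$-th row lists all $m$ embeddings $\rho_1(\omega_j),\dots,\rho_s(\omega_j),\sigma_1(\omega_j),\overline{\sigma}_1(\omega_j),\dots,\sigma_t(\omega_j),\overline{\sigma}_t(\omega_j)$. Within each conjugate pair, the passage from $(\sigma_i,\overline{\sigma}_i)$ to $(\sqrt{2}\,\Re\sigma_i,\sqrt{2}\,\Im\sigma_i)$ is effected by the fixed $2\times 2$ matrix $\tfrac{1}{\sqrt{2}}\bigl(\begin{smallmatrix}1&1\\-i&i\end{smallmatrix}\bigr)$, whose determinant has modulus $1$; hence $|\det M|=|\det A|$. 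On the other hand $A^{\mathsf{T}}A=\bigl(\Tr(\omega_i\omega_j)\bigr)_{i,j}$, so $\det(A)^2=\det\bigl(\Tr(\omega_i\omega_j)\bigr)=D_K$, giving $|\det M|=|\det A|=\sqrt{|D_K|}\neq 0$. Thus the vectors $\tau(\omega_j)$ are $\R$-linearly independent, $L_{\O_K}$ is a full-rank lattice, and its covolume is $\sqrt{|D_K|}$.

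For a general nonzero ideal $I$, the quotient $\O_K/I$ is finite of order $\mathcal{N}(I)=[\O_K:I]$ — it is a quotient of $\O_K/N\O_K\cong(\Z/N\Z)^m$ for any nonzero rational integer $N\in I$. Transporting along the group isomorphism $\tau:\O_K\xrightarrow{\ \sim\ }L_{\O_K}$, we get that $L_I=\tau(I)$ is a subgroup of $L_{\O_K}$ of index $\mathcal{N}(I)$, hence again a rank-$m$ lattice, and the standard multiplicativity of covolumes under finite index yields $\det L_I=[L_{\O_K}:L_I]\cdot\det L_{\O_K}=\mathcal{N}(I)\sqrt{|D_K|}$, as claimed.

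The only non-formal point is the middle paragraph: recognizing that the Minkowski matrix $M$ differs from the ``all-embeddings'' matrix $A$ only by $2\times 2$ blocks whose determinants have modulus one, together with the identity $\det(A)^2=D_K$ linking that matrix to the discriminant. Everything else — injectivity of $\tau$, finiteness of $\O_K/I$, and the behaviour of determinants under finite-index inclusions — is routine.
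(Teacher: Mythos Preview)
The paper does not supply its own proof of this lemma; it simply records the statement and cites Neukirch, \emph{Algebraic Number Theory}, Section~I.5. Your argument is the standard one found in that reference and is correct in substance: compute the covolume of $L_{\O_K}$ by relating the Minkowski matrix to the all-embeddings matrix and hence to $D_K$, then pass to an arbitrary ideal via the index formula.

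One small slip worth fixing: with your convention that the $j$-th \emph{row} of $A$ lists the embeddings of $\omega_j$, the trace-form matrix $\bigl(\Tr(\omega_i\omega_j)\bigr)_{i,j}$ is $AA^{\mathsf T}$, not $A^{\mathsf T}A$. This does not damage the conclusion, since $\det(A^{\mathsf T}A)=\det(AA^{\mathsf T})=\det(A)^2$ for a square matrix, but the identification you wrote is the wrong one.
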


Generally it remains hard to compute the minimum Euclidean distance of lattices. However, we can estimate a lower bound on the Euclidean length of non-zero points in the lattice $L_I$. The proof is sketched here and a similar discussion on general ideal lattices can be found in \cite{bayer2002ideal}. Note that in \cite{bayer2002ideal}, the distance between two points is defined using some special quadratic forms, while in this paper, we only focus on the standard Euclidean distance, which refers to the original meaning of packing in the Euclidean space and brings convenience in computation (\Magma{} uses
standard Euclidean distance as built-in measure).

 \begin{lemma}\label{normofideal}
   Let $I\subseteq \O_K$ be a non-zero ideal. For any $0\neq\alpha\in I$, the Euclidean length of the vector $\tau(\alpha)\in L_I$ satisfies
   \begin{eqnarray*}
     \norm{\tau(\alpha)}\geqslant \sqrt{ m}\cdot\left|\Nor(\alpha)\right|^{ 1/m}\geqslant\sqrt{m}\cdot\mathcal{N}(I)^{1/m}.
   \end{eqnarray*} In other words, $d_E(L_I)\geqslant \sqrt{m}\cdot\mathcal{N}(I)^{1/m}$.
 \end{lemma}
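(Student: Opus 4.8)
The plan is to compute the squared Euclidean length $\norm{\tau(\alpha)}^2$ directly from the definition \eqref{tau}, rewrite it as a sum over all $m$ archimedean places, and then apply the arithmetic--geometric mean inequality. For a real embedding $\rho_i$ the single coordinate $\rho_i(\alpha)$ contributes $|\rho_i(\alpha)|^2$; for each complex pair, the two coordinates $\sqrt 2\,\Re\sigma_j(\alpha)$ and $\sqrt 2\,\Im\sigma_j(\alpha)$ together contribute $2\bigl(\Re\sigma_j(\alpha)^2+\Im\sigma_j(\alpha)^2\bigr)=2|\sigma_j(\alpha)|^2=|\sigma_j(\alpha)|^2+|\overline{\sigma}_j(\alpha)|^2$, which is precisely why the normalising factor $\sqrt 2$ is built into $\tau$. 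Summing,
\[
  \norm{\tau(\alpha)}^2=\sum_{i=1}^s|\rho_i(\alpha)|^2+\sum_{j=1}^t\Bigl(|\sigma_j(\alpha)|^2+|\overline{\sigma}_j(\alpha)|^2\Bigr)=\sum_{\phi}|\phi(\alpha)|^2,
\]
where $\phi$ runs over the complete set of $m=s+2t$ embeddings $K\hookrightarrow\packC$.

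Next I would recall that $\Nor(\alpha)=\prod_\phi\phi(\alpha)$, so $|\Nor(\alpha)|=\prod_\phi|\phi(\alpha)|$, and apply AM--GM to the $m$ nonnegative reals $|\phi(\alpha)|^2$:
\[
  \frac1m\,\norm{\tau(\alpha)}^2=\frac1m\sum_\phi|\phi(\alpha)|^2\ \geqslant\ \Bigl(\prod_\phi|\phi(\alpha)|^2\Bigr)^{1/m}=|\Nor(\alpha)|^{2/m}.
\]
Taking square roots gives the first inequality $\norm{\tau(\alpha)}\geqslant\sqrt m\,|\Nor(\alpha)|^{1/m}$; here $\alpha\neq 0$ guarantees $\Nor(\alpha)\neq 0$, so the estimate is not vacuous.

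For the second inequality, from $0\neq\alpha\in I$ we get the ideal inclusion $(\alpha)\subseteq I$, hence $[\O_K:(\alpha)]=[\O_K:I]\,[I:(\alpha)]$, so $\mathcal N(I)$ divides $\mathcal N\bigl((\alpha)\bigr)$ and in particular $\mathcal N(I)\leqslant\mathcal N\bigl((\alpha)\bigr)$. Since $\mathcal N\bigl((\alpha)\bigr)=|\Nor(\alpha)|$, chaining with the previous step yields $\norm{\tau(\alpha)}\geqslant\sqrt m\,\mathcal N(I)^{1/m}$. Finally, $\tau$ is injective and additive, so by Lemma \ref{Lem:L_I} every nonzero vector of the lattice $L_I$ is $\tau(\alpha)$ for some $0\neq\alpha\in I$; therefore $d_E(L_I)=\min_{0\neq\alpha\in I}\norm{\tau(\alpha)}\geqslant\sqrt m\,\mathcal N(I)^{1/m}$.

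I do not anticipate a genuine obstacle here: the argument is essentially the one-line AM--GM estimate. The only points that require a little care are the bookkeeping that the $s$ real and $2t$ complex coordinates reproduce all $m$ absolute values $|\phi(\alpha)|$ with the correct weights (the role of the $\sqrt 2$ normalisation), and recalling the identity $\mathcal N((\alpha))=|\Nor(\alpha)|$ together with the multiplicativity of the absolute norm under the inclusion $(\alpha)\subseteq I$.
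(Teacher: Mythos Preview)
Your argument is correct and is essentially the paper's own proof: both compute $\norm{\tau(\alpha)}^2$ as the sum of $|\phi(\alpha)|^2$ over all $m$ embeddings (the paper writes this as $\sum\rho_i(\alpha)^2+2\sum\sigma_j(\alpha)\bar\sigma_j(\alpha)$), apply AM--GM to bound it below by $m|\Nor(\alpha)|^{2/m}$, and then use $(\alpha)\subseteq I$ to pass from $|\Nor(\alpha)|$ to $\mathcal N(I)$. Your write-up is in fact slightly more explicit than the paper's in justifying the step $|\Nor(\alpha)|\geqslant\mathcal N(I)$ via index multiplicativity.
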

 \begin{proof}
 The Euclidean length of the vector $\tau(\alpha)$ satisfies
 \begin{eqnarray*}
   \norm{\tau(\alpha)}^2&=&\sum_{i=1}^s\rho_i(\alpha)^2+2\sum_{j=1}^t\sigma_j(\alpha)\bar{\sigma}_j(\alpha)\\
   &\geqslant& (s+2t)\left[\prod_{i=1}^s\rho_i(\alpha)^2\cdot\prod_{j=1}^t\sigma_j(\alpha)\overline{\sigma}_j(\alpha)\cdot\prod_{j=1}^t\sigma_j(\alpha)\overline{\sigma}_j(\alpha)\right]^{\frac{1}{s+2t}}\\
   &=& m\left[\left(\Nor(\alpha)\right)^2\right]^{ 1/m}\geqslant m\cdot \mathcal{N}(I)^{2/m}.
 \end{eqnarray*}
 The last $\geqslant$ becomes an equality if and only if the ideal $I$ is a principal ideal generated by $\alpha$.

 \end{proof}

 The following lemma is a basic fact in algebraic number theory (see \cite[Section I.2]{neukirch1999algebraic}).
\begin{lemma}
  For any non-zero element $\beta\in \O_K$, we have $|\Nor(\beta)|\geqslant 1$.
\end{lemma}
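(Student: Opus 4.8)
The plan is to reduce the claim to the single observation that the only integer of absolute value strictly less than $1$ is $0$: it suffices to show that for $0\neq\beta\in\O_K$ the rational number $\Nor(\beta)$ is in fact a \emph{nonzero integer}. Recall the two standard descriptions of the field norm. On one hand, $\Nor(\beta)=\det(m_\beta)$ where $m_\beta\colon K\to K$, $x\mapsto\beta x$, is regarded as a $\packQ$-linear endomorphism of the $m$-dimensional $\packQ$-vector space $K$; on the other hand, $\Nor(\beta)=\prod_{i=1}^{s}\rho_i(\beta)\cdot\prod_{j=1}^{t}\sigma_j(\beta)\overline{\sigma}_j(\beta)$, i.e. the product of $\beta$ over all $m=s+2t$ embeddings of $K$ into $\packC$ (this is the same expression already used in the proof of Lemma~\ref{normofideal}).

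The argument then has two short steps. First, $\Nor(\beta)\in\Z$: since $\beta\in\O_K$ is integral over $\Z$, its monic minimal polynomial over $\packQ$ already has coefficients in $\Z$, and the characteristic polynomial of $m_\beta$ is a power of this minimal polynomial, hence also lies in $\Z[x]$; therefore its constant term, which equals $(-1)^{m}\Nor(\beta)$, is an integer. Second, $\Nor(\beta)\neq 0$: each embedding $\rho_i,\sigma_j\colon K\hookrightarrow\packC$ is a ring homomorphism out of a field and hence injective, so $\beta\neq 0$ forces every conjugate $\rho_i(\beta)$ and $\sigma_j(\beta)$ to be a nonzero complex number, and a finite product of nonzero complex numbers is nonzero. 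Combining the two steps, $\Nor(\beta)$ is a nonzero element of $\Z$, so $|\Nor(\beta)|\geqslant 1$.

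Alternatively, one can argue ideal-theoretically: the principal ideal $\beta\O_K$ is a nonzero ideal, so by Lemma~\ref{Lem:L_I} the quotient $\O_K/\beta\O_K$ is finite with $|\O_K/\beta\O_K|=\mathcal{N}(\beta\O_K)=|\Nor(\beta)|$, and the order of a (nonempty) finite group is at least $1$. There is no genuine obstacle here, the statement being entirely standard; the only point that needs a word of justification is the integrality claim in the first step, namely that the characteristic polynomial of multiplication by an algebraic integer has integer coefficients, which is precisely the assertion that norms (and traces) of elements of $\O_K$ are rational integers.
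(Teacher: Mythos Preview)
Your proof is correct. The paper itself does not supply a proof of this lemma at all; it simply cites it as ``a basic fact in algebraic number theory'' with a reference to Neukirch. Your argument (either the characteristic-polynomial route or the ideal-theoretic route via $|\O_K/\beta\O_K|=|\Nor(\beta)|$) fills in exactly the standard justification that the paper chose to omit, so in that sense you have given more than the paper does.
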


 From the above lemmas, the minimum Euclidean length of non-zero elements in $L_{\O_K}$ satisfies
 $d_E(L_{\O_K})\geqslant \sqrt{m}$.
 Indeed, as the vector $\tau(1)$ has Euclidean length $\sqrt{m}$, we have
 \begin{eqnarray}\label{eqn:minimum of L0}
   d_E(L_{\O_K})= \sqrt{m}.
 \end{eqnarray}


\subsection{Coding Theory}
We recall some notations and results in coding theory.

For a $q$-ary code $C$, let $n(C),M(C)$ and $d_H(C)$ denote the length, the size, and the minimum Hamming distance of $C$, respectively. Such a code is usually referred to as an $\left(n(C),M(C),d_H(C)\right)$-code.
Moreover, the relative minimum distance $\varrho(C)$ and the rate $R(C)$ are defined respectively as

\begin{eqnarray*}
\varrho(C)=\frac{d_H(C)}{n(C)},\quad  R(C)=\frac{\log_q M(C)}{n(C)}.
\end{eqnarray*}

In particular, if a code $C$ forms a linear space over $\packF_q$, then the code $C$ is called an $\left[n(C),k(C),d_H(C)\right]$-linear code over $\packF_q$, where $k(C):=\log_qM(C)$ is called the dimension of $C$. In this case, the rate $R(C)=\dfrac{k(C)}{n(C)}$.

Let $U_q$ be the set of the ordered pair $(\varrho,R)\in\R^2$, for which there exists a family $\{C_i\}_{i=0}^\infty$ of $q$-ary codes such that $n(C_i)$ increasingly goes to $\infty$ as $i$ tends to $\infty$ and
\begin{eqnarray*}
  \varrho=\lim_{i\rightarrow \infty}\varrho(C_i),\quad \text{and}\quad R=\lim_{i\rightarrow \infty}R(C_i).
\end{eqnarray*}
Here is a result on $U_q$:

\begin{proposition}[{\cite[Section 1.3.1]{tsfasman1991algebraic}} or {\cite[Proposition 3.1]{xing2008dense}}]\label{prop:code asymptotic bound}
  There exists a continuous function $R_q(\varrho)$, $\varrho\in[0,1]$, such that
  \[U_q=\left\{(\varrho,R)\in \R^2:0\leqslant R\leqslant R_q(\varrho),\ 0\leqslant \varrho\leqslant 1 \right\}.\]
  Moreover, $R_q(0)=1$, $R_q(\varrho)=0$ for $\varrho\in\left[(q-1)/q,1\right]$, and $R_q(\varrho)$ decreases on the interval $[0,(q-1)/q]$.
\end{proposition}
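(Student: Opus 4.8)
The plan is to exhibit $U_q$ as the region under the graph of the function
\[
R_q(\varrho)\ :=\ \sup\{R:(\varrho,R)\in U_q\},
\]
and then to read off the listed properties; throughout I write $A_q(n,d)$ for the largest size of a $q$-ary code of length $n$ and minimum Hamming distance at least $d$. The backbone of the argument is that $U_q$ is topologically closed, together with a small repertoire of elementary code operations (passing to subcodes, appending fixed symbols, puncturing, shortening) whose effect on the pair $(\varrho,R)$ is easy to track.

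First I would check that $U_q$ is closed: if $(\varrho_k,R_k)\to(\varrho,R)$ with each $(\varrho_k,R_k)\in U_q$, then picking for every $k$ a single code $C^{(k)}$ from the $k$-th witnessing family of length at least $k$ whose relative distance and rate are within $1/k$ of $\varrho_k$ and $R_k$ gives a family witnessing $(\varrho,R)$. Next comes the \emph{subcode trick}: given $(\varrho,R)\in U_q$ and $0\le R'\le R$, take a witnessing family $\{C_i\}$, let $\bar d_i$ be the \emph{true} minimum distance of $C_i$ (so $\bar d_i/n(C_i)\to\varrho$), and pass to subcodes of size $\lceil q^{R'n(C_i)}\rceil$ \emph{that still contain a pair of codewords at distance $\bar d_i$}; such subcodes have minimum distance exactly $\bar d_i$, hence relative distance still tending to $\varrho$, and rate tending to $R'$, so $(\varrho,R')\in U_q$. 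Since also $(\varrho,0)\in U_q$ (use the two-word codes $\{0,v\}$ with $v$ of weight $\lceil\varrho n\rceil$) and always $R\le 1$, closedness shows $R_q(\varrho)$ is attained and that $U_q$ is exactly the hypograph of $R_q$. The value $R_q(0)=1$ follows from the full spaces $\packF_q^{n}$ (relative distance $1/n\to0$, rate $1$), and $R_q(\varrho)=0$ for $\varrho>(q-1)/q$ is immediate from the Plotkin bound $M\le d/\bigl(d-\tfrac{q-1}{q}n\bigr)$, which stays bounded when $d/n\to\varrho>(q-1)/q$.

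For monotonicity on $[0,(q-1)/q]$ I would compare $R_q$ with $\ell(\varrho):=\limsup_n\tfrac1n\log_q A_q(n,\lceil\varrho n\rceil)$, which is non-increasing in $\varrho$ because $A_q(n,d)$ is non-increasing in $d$. On the one hand, a family witnessing $(\varrho_2,R_q(\varrho_2))$ eventually has minimum distance $\ge\lceil\varrho_1 n_i\rceil$ whenever $\varrho_1<\varrho_2$, whence $R_q(\varrho_2)\le\ell(\varrho_1)$. On the other hand $R_q(\varrho)\ge\ell(\varrho)$: pick a limsup-achieving subsequence of optimal codes $C_i$ of size $A_q(n_i,\lceil\varrho n_i\rceil)$; if their relative distance already tends to $\varrho$ we are done, and otherwise it tends to some $\varrho^\ast>\varrho$, in which case puncturing $C_i$ in $\lceil(\bar d_i-\varrho n_i)/(1-\varrho)\rceil$ coordinates drawn from the support of a single minimum-weight difference lowers the minimum distance \emph{exactly} by the number of deleted coordinates, producing codes of unchanged size and relative distance tending to $\varrho$, hence rate $\ge\ell(\varrho)$. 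Combining the two inequalities gives $R_q(\varrho_1)\ge\ell(\varrho_1)\ge R_q(\varrho_2)$.

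Finally, continuity of $R_q$ on $[0,1]$ is what I expect to be the main obstacle, and it is where the bookkeeping of true versus nominal minimum distance matters most. Left-continuity is soft: if $\lim_{\varrho'\uparrow\varrho}R_q(\varrho')>R_q(\varrho)$, a diagonal extraction from families witnessing $R_q(\varrho')$ for $\varrho'\uparrow\varrho$ would yield a family with relative distance $\to\varrho$ and rate exceeding $R_q(\varrho)$, contradicting maximality. Right-continuity I would obtain by \emph{shortening}: starting from a family witnessing $(\varrho,R_q(\varrho))$, repeatedly fix the most frequent symbol in one coordinate and delete it, $\lceil\delta' n_i\rceil$ times; the result has length $(1-\delta')n_i$, size at least $q^{-\lceil\delta' n_i\rceil}$ times the original, and minimum distance not decreased, so after the subcode trick it exhibits a point of $U_q$ with first coordinate exceeding $\varrho$ and second coordinate tending to $(R_q(\varrho)-\delta')/(1-\delta')$; letting $\delta'\to0$ gives $\lim_{\varrho'\downarrow\varrho}R_q(\varrho')\ge R_q(\varrho)$, and $\le$ is monotonicity. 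Continuity then upgrades the endpoints: $R_q((q-1)/q)=\lim_{\varrho\downarrow(q-1)/q}R_q(\varrho)=0$, so $R_q$ vanishes on all of $[(q-1)/q,1]$, while on $[0,(q-1)/q]$ it is continuous and non-increasing from $R_q(0)=1$ to $R_q((q-1)/q)=0$, which is the assertion.
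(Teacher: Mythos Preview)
The paper does not supply its own proof of this proposition; it is quoted as a known result from Tsfasman--Vl\u{a}du\c{t} and from Xing, and is used only as background for the Gilbert--Varshamov bound that follows. Your outline is the classical argument one finds in those references and is essentially correct: closedness of $U_q$ by diagonal extraction, the hypograph description via the subcode trick, the endpoint values from $\packF_q^{\,n}$ and the Plotkin bound, monotonicity via comparison with $\ell(\varrho)=\limsup_n\frac1n\log_q A_q(n,\lceil\varrho n\rceil)$ together with controlled puncturing, and continuity by diagonalisation (left side) and shortening (right side).

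One small point worth tightening: your shortening step for right-continuity lands you at relative distance at least $\varrho/(1-\delta')$, which is strictly above $\varrho$ only when $\varrho>0$. At $\varrho=0$ the shortened family still has relative distance tending to $0$, so the argument as written does not yield $\lim_{\varrho'\downarrow 0}R_q(\varrho')=1$. This is easily patched---for example by invoking the Gilbert--Varshamov bound $R_q(\varrho)\ge 1-H_q(\varrho)$, which the paper records in Eq.~\eqref{eqn:GV bound} immediately after the proposition---but it is a case you should flag explicitly. Apart from this, the sketch is sound and matches the standard treatment in the cited sources.
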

For $0<\varrho<1$, the $q$-ary entropy function is given as \begin{eqnarray*}
  H_q(\varrho)=\varrho\log_q(q-1)-\varrho\log_q\varrho-(1-\varrho)\log_q(1-\varrho).
\end{eqnarray*}
The asymptotic Gilbert-Varshamov (GV) bound indicates that
\begin{eqnarray}\label{eqn:GV bound}
  R_q(\varrho)\geqslant R_{GV}(q,\varrho):=1-H_q(\varrho),\quad\text{for all }\varrho\in\left(0,\dfrac {q-1}{q}\right).
\end{eqnarray}
Moreover, for any given rate $R$, there exists a family of linear codes which meets the GV bound (see \cite[Section 17.7]{macwilliams1977theory}).

\section{Main Results}\label{main result}
In this section, we introduce our new construction of dense sphere packings. In particular, several explicit constructions and asymptotically good packing families are provided at the end of this section. Our idea is to apply special concatenating methods on $L_I$ defined in Lemma \ref{Lem:L_I}.

Let $[K:\packQ]=m$ and let $\ip$ be a prime ideal of $\O_K$. Assume that the residue field $F_{\ip}=\O_K/\ip$ of $\ip$ is isomorphic to the finite field $\packF_q$. Let $\tau$ be the canonical embedding defined in Eq.~\eqref{tau}.

For $i\in\N$, let $L_{\ip^i}^n$ denote the Cartesian product of $n$ identical copies of lattice $L_{\ip^i}=\tau(\ip^i)$. Then $L_{\ip^i}^n$ is a lattice of rank $mn$ by Lemma~\ref{Lem:L_I}. Moreover, the determinant satisfies $$\det L_{\ip^i}^n=\left(\det L_{\ip^i}\right)^n,$$ which is straightforward from the definition of lattice determinant. In addition, by the definition of $L_{\ip^i}^n$, the minimum Euclidean distance satisfies
 $$d_E(L_{\ip^i}^n)=d_E(L_{\ip^i}).$$ For simplicity, in this section we write $L_{i}=L_{\ip^{i}}$ for short.

 As $\O_K$ is a Dedekind domain (see \cite[Section I.3]{neukirch1999algebraic} or \cite[Chapter 9]{atiyah1994introduction}), let $\kappa=\O_K/\ip$, then we have $\dim_\kappa \ip^i/\ip^{i+1}=1 $ for all $i\in \N$. We know $\kappa\cong\packF_q$. Thus for $i\in\N$, we can choose the sets $S_i:=\left\{\alpha_1^{(i)}=0,\alpha_2^{(i)},\cdots,\alpha_q^{(i)}\right\}\subseteq \ip^i$ such that  \begin{eqnarray}\label{eqn:definition of SetS}{\alpha}_1^{(i)}\mod \ip^{i+1},\cdots,{\alpha}_q^{(i)}\mod \ip^{i+1}\end{eqnarray} represent the $q$ distinct elements in $\ip^i/\ip^{i+1}$.




\subsection{Concatenation with One Code}\label{subsec:one code}

We fix one index $i_0\in\N_{\geqslant 1}$ in the discussion here and generalize the result to a family of indices in the next subsection. Let $\LP$ be a subset of $L_{i_0}^n$, which can be regarded as a packing in $\R^{mn}$.

Given a $q$-ary code $C$ with length $n$, in order to perform our concatenation, we take the code alphabet set of $C$ to be $S_{i_0-1}\subseteq \ip^{i_0-1}$ (see Eq.~\eqref{eqn:definition of SetS}). In this way, $C$ can be regarded as a finite subset of $\O_K^n$. The advantage of using $S_{i_0-1}$ as the alphabet set is that it can help us bound the packing radius of our construction. The details are included in the proof of Proposition~\ref{prop:general result}.

We define $\tau(\mathfrak{c})$ for each codeword $\mathfrak{c}=(c_1,\cdots,c_n)\in C\subseteq \O_K^n$ as
\begin{eqnarray*}
  \tau(\mathfrak{c}):=\left(\tau(c_1),\cdots,\tau(c_n)\right)\in \R^{mn},
\end{eqnarray*} and take $\tau(C):=\left\{\tau(\mathfrak{c}):\mathfrak{c}\in C\right\}$.
We consider the concatenation
$$\tau(C)+\LP:=\left\{\a+\p\in \R^{mn}:\a\in\tau(C),\p\in \LP\right\},$$ which corresponds to a packing in $\R^{mn}$. We will analyze the density of such a packing in this subsection.

The following lemma characterizes the minimum Euclidean distance of the concatenation $\tau(C)+\LP$, which will play an important role in later discussions.

\begin{lemma}\label{lem:key distance lemma}
Let $\ip$ be a non-zero prime ideal in $\O_K$ with absolute norm $\mathcal{N}(\ip)=q$ {\em (}i.e., the residue field $F_{\ip}\cong\packF_q${\em )}. For $i_0\in\N_{\geqslant 1}$, let
 \begin{enumerate}[(i)]
   \item $\LP$ be a subset of $L_{i_0}^n$,
   \item $C$ be a $q$-ary $(n,M,d_C)$-code over the alphabet set $S_{i_0-1}$. In addition, $C$ contains the zero codeword.
 \end{enumerate}
 Then the minimum Euclidean distance of the packing $\tau(C)+\LP$ satisfies
    \begin{eqnarray*}
   d_E\left(\tau(C)+\LP\right)\geqslant \min\left\{d_E(L_{i_0-1})\sqrt{d_C},\ d_E(\LP)\right\}.
    \end{eqnarray*}

   In particular, if $d_E(L_{i_0-1}) \sqrt{d_C}\geqslant d_E\left(\LP\right)$, we have exactly $$d_E\left(\tau(C)+\LP\right)=d_E(\LP).$$
\end{lemma}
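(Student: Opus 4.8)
The plan is to take two distinct points $\a_1+\p_1$ and $\a_2+\p_2$ in $\tau(C)+\LP$, with $\a_1,\a_2\in\tau(C)$ and $\p_1,\p_2\in\LP$, and bound $\norm{(\a_1+\p_1)-(\a_2+\p_2)}$ from below by the claimed quantity. I would split into two cases according to whether the underlying codewords $\c_1,\c_2\in C$ agree. First, if $\c_1=\c_2$, then $\a_1=\a_2$, so the difference is $\p_1-\p_2$, a nonzero element of $L_{i_0}^n-L_{i_0}^n=L_{i_0}^n$; since $L_{i_0}^n$ is a lattice with $d_E(L_{i_0}^n)=d_E(L_{i_0})$, and $d_E(L_{i_0})\geqslant d_E(\LP)$ would not even be needed here—rather $\norm{\p_1-\p_2}\geqslant d_E(\LP)$ directly because $\p_1,\p_2\in\LP$ are distinct. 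That disposes of the first case with bound $d_E(\LP)$.

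The heart of the argument is the case $\c_1\neq\c_2$. Here I would work coordinatewise: for each $j\in\{1,\dots,n\}$, the $j$-th block of the difference is $\tau(c_{1,j})-\tau(c_{2,j})+\tau(p_{1,j})-\tau(p_{2,j})=\tau\big((c_{1,j}-c_{2,j})+(p_{1,j}-p_{2,j})\big)$, using that $\tau$ is additive. Now $c_{1,j}-c_{2,j}\in\ip^{i_0-1}$ and $p_{1,j}-p_{2,j}\in\ip^{i_0}\subseteq\ip^{i_0-1}$, so the whole element lies in $\ip^{i_0-1}$; moreover its residue modulo $\ip^{i_0}$ equals that of $c_{1,j}-c_{2,j}$, because $p_{1,j}-p_{2,j}\in\ip^{i_0}$. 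On the $d_C$ or more coordinates $j$ where $c_{1,j}\neq c_{2,j}$, the defining property of the alphabet set $S_{i_0-1}$ (its elements represent distinct classes mod $\ip^{i_0}$) forces $c_{1,j}-c_{2,j}\notin\ip^{i_0}$, hence $(c_{1,j}-c_{2,j})+(p_{1,j}-p_{2,j})$ is a nonzero element of $\ip^{i_0-1}$ lying outside $\ip^{i_0}$. In particular it is a nonzero element of the ideal $\ip^{i_0-1}$, so by Lemma~\ref{normofideal} its image under $\tau$ has Euclidean length at least $d_E(L_{i_0-1})$. Summing the squared block norms over the (at least $d_C$) nonzero coordinates gives
\[
\norm{(\a_1+\p_1)-(\a_2+\p_2)}^2\geqslant d_C\cdot d_E(L_{i_0-1})^2,
\]
so the norm is at least $d_E(L_{i_0-1})\sqrt{d_C}$. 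Combining the two cases yields the stated minimum-distance bound.

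For the final assertion, suppose $d_E(L_{i_0-1})\sqrt{d_C}\geqslant d_E(\LP)$. The lower bound just proved then reads $d_E(\tau(C)+\LP)\geqslant d_E(\LP)$. For the reverse inequality, I would exhibit a pair of points realizing distance $d_E(\LP)$: since $C$ contains the zero codeword, $\tau(\0)=\0\in\tau(C)$, so $\LP=\0+\LP\subseteq\tau(C)+\LP$, and any two points of $\LP$ at distance $d_E(\LP)$ are also points of $\tau(C)+\LP$ at that distance. Hence $d_E(\tau(C)+\LP)\leqslant d_E(\LP)$, giving equality.

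The main obstacle is the coordinatewise residue-class argument in the second case: one must be careful that using $S_{i_0-1}$ as the code alphabet is exactly what guarantees that a nonzero coordinate difference $c_{1,j}-c_{2,j}$ does not accidentally land in $\ip^{i_0}$, and that adding a $\p$-perturbation from $\ip^{i_0}$ cannot knock the sum back inside $\ip^{i_0}$ (hence cannot make it zero or shrink its norm below $d_E(L_{i_0-1})$). Once that per-coordinate fact is pinned down, the rest is a routine application of Lemma~\ref{normofideal} together with additivity of $\tau$ and the definition of the Cartesian-product lattice. I do not expect the determinant or density computations to enter here—only the distance estimate.
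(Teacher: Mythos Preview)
Your proposal is correct and follows essentially the same coordinatewise argument as the paper: split on whether the two codewords agree, and in the nontrivial case use the defining property of $S_{i_0-1}$ to ensure each relevant block lies in $\ip^{i_0-1}\setminus\ip^{i_0}$ (hence is nonzero in $\ip^{i_0-1}$) so that its $\tau$-image has length at least $d_E(L_{i_0-1})$. If anything your version is slightly more careful: the paper bounds the norm of a single nonzero element $\tau(\mathfrak{c})+\p$ (tacitly treating minimum distance as minimum norm, which is not guaranteed for an arbitrary subset $\LP$), whereas you work directly with the distance between two arbitrary points, which is what $d_E$ actually measures.
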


\begin{proof}
  Let $\tau(\mathfrak{c})+\p$ be a non-zero element in $\tau(C)+\LP$, where $\mathfrak{c}\in C$ and $\p\in \LP$. If $\mathfrak{c}=\0$, then
\begin{eqnarray*}\label{eqn:c=0}
\norm{\tau(\mathfrak{c})+\p}=\norm{\p}
\geqslant d_E(\LP).
\end{eqnarray*}

\smallskip

 If $\mathfrak{c}\neq \0$, without loss of generality, we assume $\mathfrak{c}=(c_1,\cdots,c_e,\0)$, where $e$ is the Hamming weight of $\mathfrak{c}$ and $c_i\neq 0$ for $1\leqslant i\leqslant e$. As $\LP\subseteq L_{i_0}^n$, we can further assume $\p=\left(\tau(\rho_1),\cdots,\tau(\rho_n)\right)$, where $\rho_j\in \ip^{i_0}$ for $1\leqslant j\leqslant n$. Thus
 $$\tau(\mathfrak{c})+\p=\left(\tau(c_1+\rho_1),\cdots,\tau(c_e+\rho_e),\tau(\rho_{e+1}),\cdots,\tau(\rho_n)\right).$$As $c_1,\cdots,c_e$ are non-zero elements in $S_{i_0-1}$, i.e., for $1\leqslant i\leqslant e$, $c_i\in\ip^{i_0-1}\setminus \ip^{i_0}$, we have that $c_i+\rho_i$ are non-zero elements in $\ip^{i_0-1}$ for $1\leqslant i\leqslant e$. Thus
\begin{eqnarray*}\label{lenghofcodeword}
\norm{\tau(\mathfrak{c})+\p}^2\geqslant \sum_{i=1}^e\norm{\tau(c_i+ \rho_i)}^2
\geqslant e\cdot d_E^2(L_{i_0-1})\geqslant d_C\cdot d_E^2(L_{i_0-1}).
\end{eqnarray*}

\smallskip

In summary, the minimum Euclidean distance of the packing $\tau(C)+\LP$ satisfies
\begin{eqnarray}\label{eqn:distance inequality}
  d_E\left(\tau(C)+\LP\right)
  \geqslant\min\left\{d_E(L_{i_0-1}) \sqrt{d_C},\ d_E(\LP)\right\}.
\end{eqnarray}

In particular, if $d_E(L_{i_0-1}) \sqrt{d_C}\geqslant d_E\left(\LP\right)$, then from Eq.~\eqref{eqn:distance inequality} above, we have the minimum distance $d_E\left(\tau(C)+\LP\right)\geqslant d_E(\LP)$. Meanwhile, as $C$ contains the zero codeword, $\LP$ is a subset of $\tau(C)+\LP$, thus $d_E(\LP)\geqslant d_E\left(\tau(C)+\LP\right)$. Finally we have exactly $$d_E\left(\tau(C)+\LP\right)= d_E(\LP).$$
\end{proof}

The following proposition generalizes Proposition $2.3$ of \cite{xing2008dense}, where only the special case $K=\packQ(\sqrt{-3})$ is discussed.

\begin{proposition}\label{prop:general result}
 Under the same assumption on $\LP$ and $C$ as in Lemma \ref{lem:key distance lemma},
 \begin{enumerate}[(i)]
\item if $d_E(L_{i_0-1}) \sqrt{d_C}\geqslant d_E\left(\LP\right)$, then the density of $\tau(C)+\LP$ as a packing in $\R^{mn}$ is at least $M\cdot\Delta\left(\LP\right)$, where $\Delta\left(\LP\right)$ is the density of the packing $\LP$ as a packing in $\R^{mn}$, and $M$ is the size of the code $C$;
\item if $\LP$ is a lattice and $C$ satisfies that for any $\mathfrak{u},\mathfrak{v}\in C$, the sum $\tau(\mathfrak{u})+\tau(\mathfrak{v})$ is equal to $\tau(\mathfrak{w})+\p$ for some $\mathfrak{w}\in C$ and $\p\in \LP$, then $\tau(C)+\LP$ is also a lattice.
\end{enumerate}
\end{proposition}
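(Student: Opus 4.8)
The plan is to treat the two parts separately, both leaning on Lemma~\ref{lem:key distance lemma}.

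For part~(i): the hypothesis $d_E(L_{i_0-1})\sqrt{d_C}\geqslant d_E(\LP)$ is exactly the one under which Lemma~\ref{lem:key distance lemma} yields the \emph{equality} $d_E(\tau(C)+\LP)=d_E(\LP)$, so both packings carry the same packing radius $r=d_E(\LP)/2$ and their densities can be compared by a centre count. First I would fix a large $R$, set $d:=\max_{\mathfrak{c}\in C}\norm{\tau(\mathfrak{c})}$, and note that $\tau(\mathfrak{c})+\p\in\packB_{mn}(R)$ whenever $\mathfrak{c}\in C$ and $\p\in\LP\cap\packB_{mn}(R-d)$. The crucial observation is that these $M\cdot|\LP\cap\packB_{mn}(R-d)|$ points are pairwise distinct: if $\tau(\mathfrak{c})+\p=\tau(\mathfrak{c}')+\p'$, then $\tau(\mathfrak{c}-\mathfrak{c}')=\p'-\p\in L_{i_0}^n$ since $\p,\p'\in\LP\subseteq L_{i_0}^n$ and $L_{i_0}^n$ is a lattice; but the codeword coordinates lie in $S_{i_0-1}$, a full set of coset representatives of $\ip^{i_0}$ in $\ip^{i_0-1}$, so distinct codewords differ in some coordinate modulo $\ip^{i_0}$, which by injectivity of $\tau$ is impossible unless $\mathfrak{c}=\mathfrak{c}'$, whence also $\p=\p'$. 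Therefore $|(\tau(C)+\LP)\cap\packB_{mn}(R)|\geqslant M\cdot|\LP\cap\packB_{mn}(R-d)|$, and substituting this into the definition of $\Delta$, using $\vol(\packB_{mn}(\rho))=\rho^{mn}V_{mn}$ and $\bigl((R-d+r)/(R+r)\bigr)^{mn}\to 1$ as $R\to\infty$ (and reindexing $R\mapsto R+d$ so that the surviving $\limsup$ is exactly $\Delta(\LP)$), I would conclude $\Delta(\tau(C)+\LP)\geqslant M\cdot\Delta(\LP)$.

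For part~(ii): I would check that $D:=\tau(C)+\LP$ is a discrete additive subgroup of $\R^{mn}$. Discreteness is free, since the codeword coordinates lie in $\ip^{i_0-1}$ and $\LP\subseteq L_{i_0}^n\subseteq L_{i_0-1}^n$, hence $D\subseteq L_{i_0-1}^n=:\Lambda$, a lattice. We have $0\in D$ (the zero codeword belongs to $C$ and $0\in\LP$), and $D$ is closed under addition: for $\mathfrak{u},\mathfrak{v}\in C$ and $\p_1,\p_2\in\LP$ the hypothesis gives $\tau(\mathfrak{u})+\tau(\mathfrak{v})=\tau(\mathfrak{w})+\p$ with $\mathfrak{w}\in C$, $\p\in\LP$, so $(\tau(\mathfrak{u})+\p_1)+(\tau(\mathfrak{v})+\p_2)=\tau(\mathfrak{w})+(\p+\p_1+\p_2)\in D$ because $\LP$, being a lattice, is closed under addition. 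The only remaining point, closure under negation, is not handed to us directly; I would obtain it by passing to the quotient. Since $D$ is a union of cosets of the subgroup $\LP\leqslant\Lambda$, its image $\bar{D}$ in $\Lambda/\LP$ is a \emph{finite} set (of cardinality at most $M$), closed under the group law and containing the identity; and a finite submonoid of a group is a subgroup — iterating $\bar{h}\in\bar{D}$ we reach $\bar{h}^{k}=\bar{h}^{j}$ for some $k>j$, so $\bar{h}^{k-j}$ is the identity and $\bar{h}^{-1}=\bar{h}^{k-j-1}\in\bar{D}$. Hence $\bar{D}$ is a subgroup of $\Lambda/\LP$, so its preimage $D$ is a subgroup of $\Lambda$, i.e. a discrete additive subgroup of $\R^{mn}$ — a lattice.

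I expect the density estimate in~(i) to be the more computational but less subtle part; its one delicate ingredient is the distinctness of the translated centres, which is precisely where the choice of $S_{i_0-1}$ as a transversal of $\ip^{i_0}$ in $\ip^{i_0-1}$ (together with injectivity of $\tau$) enters. The genuine obstacle is negation-closure in~(ii): the hypothesis supplies only additive closure, and the efficient remedy is the ``finite submonoid of a group is a subgroup'' remark applied inside $\Lambda/\LP$, which has the added virtue of requiring nothing about the rank of $\LP$.
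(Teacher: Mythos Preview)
Your argument for part~(i) is correct and mirrors the paper's almost exactly: the paper sets $s=\max_{\mathfrak{c}\in C}\norm{\tau(\mathfrak{c})}$, uses the same ball-inclusion to count centres, invokes the same coset-representative property of $S_{i_0-1}$ to get disjointness of the translates $\tau(\mathfrak{c})+\LP$, and applies Lemma~\ref{lem:key distance lemma} to equate the packing radii before passing to the $\limsup$. Your reindexing $R\mapsto R+d$ is just the paper's choice to work with balls of radius $b$ and $b+s$.

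For part~(ii) you go well beyond the paper, which disposes of it in one line (``By the definition of a lattice, this part is true''). Your observation that the hypothesis literally supplies only \emph{additive} closure, not closure under negation, is well taken, and your fix---pass to $\Lambda/\LP$ with $\Lambda=L_{i_0-1}^n$, note that the image of $\tau(C)+\LP$ is a finite submonoid (cardinality $\leqslant M$) of an abelian group, hence a subgroup, and pull back---is clean and correct. It also handles the case where $\LP$ has rank $<mn$, which a naive finite-index argument would miss. So your treatment of~(ii) is genuinely more rigorous than the paper's; what you lose is brevity, what you gain is an honest verification of the one step the paper elides.
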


\begin{proof}
(i)
   We denote the volume of the unit sphere in $\R^N$ by $V_N$ and the sphere of radius $b$ by $\packB_N(b)$. Let $s=\max\{\norm{\tau(\mathfrak{c})}:\mathfrak{c}\in C\}$.
For any $\mathfrak{c}\in C$ and $\p\in\LP\cap \packB_{mn}(b)$, we have $\tau(\mathfrak{c})+\p\in \left(\tau(\mathfrak{c})+\LP\right)\cap\packB_{mn}(b+s)$. This implies that
\begin{eqnarray*}\label{eqn:geometric relation1}
 \left |\left(\tau(\mathfrak{c})+\LP\right)\cap\packB_{mn}(b+s)\right|\geqslant \left|\LP\cap \packB_{mn}(b)\right|.
\end{eqnarray*}
Moreover, as the elements in the alphabet set $S_{i_0-1}$ of $C$ represent the $q$ distinct elements in $\ip^{i_0-1}/\ip^{i_0}$ and $\LP\subseteq L_{i_0}^n$, we have if $\mathfrak{c}_i\neq \mathfrak{c}_j$ then
 \begin{eqnarray*}\label{eqn:geometric relation2}
   \left(\tau(\mathfrak{c}_i)+\LP\right)\cap  \left(\tau(\mathfrak{c}_j)+\LP\right)=\emptyset.
 \end{eqnarray*}

 We write $d=d_E\left(\tau(C)+\LP\right)$ for short and from Lemma \ref{lem:key distance lemma} we have $d=d_E(\LP)$. Finally we obtain
\begin{eqnarray*}
  &&\Delta\left(\tau(C)+\LP\right)\\
  &=&\limsup_{b\rightarrow \infty}\dfrac{\left|\left(\tau(C)+\LP\right)\cap \packB_{mn}\left(b+s\right)\right|\left(d/2\right)^{mn}V_{mn}}{\vol(\packB_{mn}(b+s+d/2)}\\
  &=&\limsup_{b\rightarrow \infty}\dfrac{\left(\sum_{\mathfrak{c}\in C}\left|\left(\tau(\mathfrak{c})+\LP\right)\cap \packB_{mn}\left(b+s\right)\right|\right)\left(d/2\right)^{mn}V_{mn}}{\vol(\packB_{mn}(b+s+d/2)}\\
  &\geqslant&\limsup_{b\rightarrow \infty}\dfrac{|C|\cdot\left|\LP\cap\packB_{mn}(b)\right|\left(d_E(\LP)/2\right)^{mn}V_{mn}}{\vol\left(\packB_{mn}\left(b+s+d/2\right)\right)}\\
  &=&|C|\cdot\Delta\left(\LP\right)=M\cdot\Delta(\LP).
\end{eqnarray*}

\noindent(ii) By the definition of a lattice, this part is true.
\end{proof}

\subsection{Concatenation with a Family of Codes}\label{subsec: family of codes}

For a family of $q$-ary codes $\{C_i\}_{i=0}^{\ell-1}$, where $\ell\in\N_{\geqslant 1}$, we can take the code alphabet set of $C_i$ to be $S_i$ (see Eq.~\eqref{eqn:definition of SetS}).
  Note that the choice of the alphabet set of the codes is only used in the proof. For the computation, we only care about the length, size and minimum Hamming distance of the codes.

     Similar to Subsection \ref{subsec:one code}, for any subset $\LP$ of $L_{\ell}^n$, and a family of $q$-ary codes $\mathcal{C}=\{C_i\}_{i=0}^{\ell-1}$ with length $n$, we consider the concatenation
  \begin{eqnarray}\label{def:packing concatenate family}
    \qquad \tau(\mathcal{C})+\LP:=\sum_{i=0}^{\ell-1}\tau(C_i)+\LP=\left\{\sum_{i=0}^{\ell-1}\a_i+\p\in \R^{mn}:\a_i\in \tau(C_i),\p\in \LP\right\}.
  \end{eqnarray}
   Note that for each $0\leqslant i\leqslant \ell-1$, as the code alphabet set of $C_i$ is $S_i$, the set $\tau(C_i)$ is a finite subset of $L_{i}^n$. Meanwhile $\LP$ is a subset of $L_{\ell}^n$. As for any $0\leqslant i\leqslant \ell$, the lattice $L_{i}^n$ is a sublattice of $L_{\O_K}^n$, the concatenation Eq.~\eqref{def:packing concatenate family} makes sense within $L_{\O_K}^n$.

\begin{proposition}\label{Strong}
Let $\ip$ be a non-zero prime ideal in $\O_K$ with absolute norm $\mathcal{N}(\ip)=q$. For $\ell\in\N_{\geqslant 1}$, let
\begin{enumerate}[(i)]
\item $\LP$ be a subset of $L_\ell^n$,
\item $\mathcal{C}=\left\{C_i\right\}_{i=0}^{\ell-1}$ be a family of $q$-ary codes, where $C_i$ is an $\left(n,M_i,d_{C_i}\right)$-code, the alphabet set of $C_i$ is $S_i$, and $d_{C_i}\geqslant \left\lceil\dfrac{d_E^2(\LP)}{d_E^2(L_i)}\right\rceil$. In addition, for each $0\leqslant i\leqslant \ell-1$, $C_i$ contains the zero codeword.
\end{enumerate}

Then the density of $\tau(\mathcal{C})+\LP$ as a packing in $\R^{mn}$ is at least $\Delta(\LP)\cdot\prod_{i=0}^{\ell-1}M_i$.
\end{proposition}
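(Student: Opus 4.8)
The plan is to prove this by peeling off the codes one at a time, reducing each step to the single-code statement of Proposition \ref{prop:general result}. Set $\LP_\ell := \LP$ and, descending from $i=\ell-1$ down to $i=0$, recursively define $\LP_i := \tau(C_i)+\LP_{i+1}$, so that $\LP_0 = \tau(\mathcal{C})+\LP$. Since the alphabet set of $C_i$ is $S_i\subseteq\ip^i$, we have $\tau(C_i)\subseteq L_i^n$, and since $\ip^{i+1}\subseteq\ip^i$ gives $L_{i+1}^n\subseteq L_i^n$, an induction shows $\LP_{i+1}\subseteq L_{i+1}^n\subseteq L_i^n$; hence each $\LP_i$ is a subset of $L_i^n$. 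In particular the hypotheses of Lemma \ref{lem:key distance lemma} are available at every step, applied with ``$\LP$'' there equal to $\LP_{i+1}$, with ``$C$'' equal to $C_i$, and with $i_0=i+1$ (so that $d_E(L_{i_0-1})=d_E(L_i)$).

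The heart of the argument is the claim that for each $0\leqslant i\leqslant \ell-1$,
\[
 d_E(\LP_i)=d_E(\LP_{i+1})\quad\text{and}\quad \Delta(\LP_i)\geqslant M_i\cdot\Delta(\LP_{i+1}).
\]
I would prove this by downward induction on $i$, i.e.\ induction on $k=\ell-i$. The key observation is that the distance equalities telescope: once one knows $d_E(\LP_{i+1})=d_E(\LP_{i+2})=\cdots=d_E(\LP_\ell)=d_E(\LP)$, the hypothesis $d_{C_i}\geqslant\lceil d_E^2(\LP)/d_E^2(L_i)\rceil$ yields precisely $d_E(L_i)\sqrt{d_{C_i}}\geqslant d_E(\LP)=d_E(\LP_{i+1})$, which is exactly the trigger condition ``$d_E(L_{i_0-1})\sqrt{d_C}\geqslant d_E(\LP)$'' appearing in Lemma \ref{lem:key distance lemma} and in Proposition \ref{prop:general result}(i) with $i_0=i+1$. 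Applying those two results to $\LP_i=\tau(C_i)+\LP_{i+1}$ then gives simultaneously $d_E(\LP_i)=d_E(\LP_{i+1})$ and $\Delta(\LP_i)\geqslant M_i\cdot\Delta(\LP_{i+1})$, which is the induction step. The base case $k=1$ (i.e.\ $i=\ell-1$) is the same computation, using $d_E(\LP_\ell)=d_E(\LP)$ directly.

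Granting the claim, chaining the density inequalities from $i=0$ up through $i=\ell-1$ gives
\[
 \Delta\bigl(\tau(\mathcal{C})+\LP\bigr)=\Delta(\LP_0)\geqslant M_0\,\Delta(\LP_1)\geqslant M_0M_1\,\Delta(\LP_2)\geqslant\cdots\geqslant\Bigl(\prod_{i=0}^{\ell-1}M_i\Bigr)\Delta(\LP_\ell)=\Bigl(\prod_{i=0}^{\ell-1}M_i\Bigr)\Delta(\LP),
\]
which is the desired bound.

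The only subtle point — and hence the ``main obstacle'' — is the distance bookkeeping in the induction: the code-distance hypothesis is phrased in terms of the fixed quantity $d_E(\LP)$ rather than the intermediate $d_E(\LP_{i+1})$, so one must carry the ``$d_E$ is preserved'' half of the induction hypothesis in tandem with the ``$\Delta$ grows'' half in order to verify the hypothesis of Proposition \ref{prop:general result}(i) at each stage. Once this is set up carefully, every individual step is just an invocation of the single-code proposition together with the nesting $\ip^{i+1}\subseteq\ip^i$.
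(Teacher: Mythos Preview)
Your proposal is correct and follows essentially the same approach as the paper's proof: define $\LP_\ell=\LP$ and $\LP_i=\tau(C_i)+\LP_{i+1}$, prove by induction on $k=\ell-i$ that $d_E(\LP_i)=d_E(\LP_{i+1})$ and $\Delta(\LP_i)\geqslant M_i\,\Delta(\LP_{i+1})$ via Lemma~\ref{lem:key distance lemma} and Proposition~\ref{prop:general result}(i), then chain the inequalities. Your identification of the ``main obstacle'' (carrying the distance-preservation alongside the density growth so that the hypothesis $d_{C_i}\geqslant\lceil d_E^2(\LP)/d_E^2(L_i)\rceil$ matches the trigger condition at each stage) is exactly the point the paper's induction is organized around.
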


\begin{proof}

Let $\LP_\ell:=\LP\subseteq L_\ell^n$. For $i$ from $\ell-1$ to $0$, we can recursively define
$$\LP_i:=\tau(C_{i})+\LP_{i+1}\subseteq L_i^n.$$ Note that $\LP_0=\tau(\mathcal{C})+\LP$.

We claim that for $0\leqslant i\leqslant \ell-1$,
\begin{eqnarray}\label{eqn:induction assumption}
  d_E(\LP_i)=d_E(\LP_{i+1})\quad \text{and}\quad \Delta(\LP_{i})\geqslant M_i\cdot \Delta(\LP_{i+1}).
\end{eqnarray}

We use induction on $k=\ell-i$ to prove the claim. When $k=1$, $i=\ell-1$, as the minimum Hamming distance of $C_{\ell-1}$ satisfies $d_{C_{\ell-1}}d_E^2(L_{\ell-1})\geqslant d_E^2(\LP)=d_E^2(\LP_\ell)$, by Lemma \ref{lem:key distance lemma} and Proposition \ref{prop:general result} (i), we have
\begin{eqnarray*}
 \quad  d_E(\LP_{\ell-1})=d_E\left(\tau(C_{\ell-1})+\LP_\ell\right)=d_E(\LP_\ell)\quad \text{and}\quad \Delta(\LP_{\ell-1})\geqslant M_{\ell-1}\cdot \Delta(\LP_{\ell}).
\end{eqnarray*}

\smallskip

Suppose for all $k$ in the range $1\leqslant k<k_0+1\leqslant \ell$, the statement Eq.~\eqref{eqn:induction assumption} is true, i.e., $i=\ell-k$,
  \begin{eqnarray}\label{eqn:induction step}
  d_E(\LP_{\ell-k})=d_E(\LP_{\ell-k+1})\quad \text{and}\quad \Delta(\LP_{\ell-k})\geqslant M_{\ell-k}\cdot \Delta(\LP_{\ell-k+1})
  \end{eqnarray}
   is true. By induction, we need to prove
  \begin{eqnarray*}
  \quad d_E(\LP_{\ell-k_0-1})=d_E(\LP_{\ell-k_0})\quad \text{and}\quad \Delta(\LP_{\ell-k_0-1})\geqslant M_{\ell-k_0-1}\cdot\Delta(\LP_{\ell-k_0}).
  \end{eqnarray*}
From Eq.~\eqref{eqn:induction step}, we have
\begin{eqnarray*}
  d_E(\LP_{\ell-k_0})=d_E(\LP_{\ell-k_0+1})=\cdots=d_E(\LP_{\ell-1})=d_E(\LP_{\ell})=d_E(\LP).
\end{eqnarray*}
Besides, the minimum Hamming distance of $C_{\ell-k_0-1}$ satisfies
\begin{eqnarray*} d_{C_{\ell-k_0-1}}d_E^2(L_{\ell-k_0-1})\geqslant d_E^2(\LP)=d_E^2(\LP_{\ell-k_0}).
\end{eqnarray*}
Then by Lemma \ref{lem:key distance lemma} and Proposition \ref{prop:general result} (i), as $\LP_{\ell-k_0-1}=\tau(C_{\ell-k_0-1})+\LP_{\ell-k_0}$, we get
\begin{eqnarray*}
  \quad d_E(\LP_{\ell-k_0-1})=d_E(\LP_{\ell-k_0})\quad \text{and}\quad \Delta(\LP_{\ell-k_0-1})\geqslant M_{\ell-k_0-1}\cdot \Delta(\LP_{\ell-k_0}).
\end{eqnarray*}Thus we have proved the claim Eq.~\eqref{eqn:induction assumption}. From the claim, we easily obtain $\Delta(\LP_0)\geqslant \Delta(\LP)\cdot\prod_{i=0}^{\ell-1}M_i$.
\end{proof}

\begin{remark}
  From the above proposition, given a dense packing $\LP$ from the canonical embedding Eq.~\eqref{tau} on some prime ideal in a certain algebraic number field, we can concatenate several codes satisfying the requirements in Proposition \ref{Strong} to $\LP$ to obtain some new denser packings. The density increases by a ratio larger than the product of the sizes of these codes.
\end{remark}

 Note that generally it is hard to determine the minimum Euclidean distance of $\LP\subseteq L_{\ell}^n\subseteq \R^{mn}$. However, we can consider special choices of $\LP$ with some algebraic structure. The following corollary,  which plays a crucial role in Subsection \ref{examples} for Examples \ref{example1} - \ref{lastexample}, considers the case $\LP=L_{\ell}^n$. The advantage is that for all $i\in \N$, we have $d_E(L_i^n)=d_E(L_{i})$. Instead of searching for the minimum Euclidean distance of $L_i^n$ in $\R^{mn}$, we can search for the minimum Euclidean distance of $L_i$ in $\R^{m}$. In our examples, $m$ is small such that \Magma{} can be used to compute the minimum Euclidean distance.

\begin{corollary}\label{coro:LP=L_l}
  Let $\ip$ be a non-zero prime ideal in $\O_K$ with absolute norm $\mathcal{N}(\ip)=q$. For $\ell\in\N_{\geqslant 1}$, let
$\mathcal{C}=\left\{C_i\right\}_{i=0}^{\ell-1}$ be a family of $q$-ary codes, where $C_i$ is an $\left(n,M_i,d_{C_i}\right)$-code, the alphabet set of $C_i$ is $S_i$, and $d_{C_i}\geqslant \left\lceil\dfrac{d_E^2(L_\ell)}{d_E^2(L_i)}\right\rceil$. In addition, for each $0\leqslant i\leqslant \ell-1$, the code $C_i$ contains the zero codeword.

Then the density of $\tau(\mathcal{C})+L_\ell^n$ as a packing in $\R^{mn}$ satisfies $$\Delta(\tau(\mathcal{C})+L_\ell^n)\geqslant \dfrac{\left(d_E(L_\ell)/2\right)^{mn}V_{mn}}{\left(q^\ell\sqrt{|D_K|}\right)^n}\cdot\prod_{i=0}^{\ell-1}M_i,$$where $D_K$ is the discriminant of $K$ and $V_{mn}$ is the volume of the unit sphere in $\R^{mn}$. Moreover, the center density $\delta=\delta(\tau(\mathcal{C})+L_\ell^n)$ satisfies
$$\log_2\delta\geqslant mn\log_2d_E(L_\ell)-mn-n\ell\log_2q-\frac n2\log_2|D_K|+\sum_{i=0}^{\ell-1}\log_2M_i.$$

In particular, if for all $0\leqslant i\leqslant \ell-1$, the code $C_i$ is a linear code of dimension $k_i=\log_qM_i$, then
\begin{eqnarray*}
  \log_2\delta\geqslant mn\log_2d_E(L_\ell)-mn-n\ell\log_2q-\frac n2\log_2|D_K|+\log_2q\cdot\sum_{i=0}^{\ell-1}k_i.
\end{eqnarray*}
\end{corollary}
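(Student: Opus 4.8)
The plan is to specialize Proposition~\ref{Strong} to the choice $\LP=L_\ell^n$ and then to make the density of this particular lattice explicit. First I would record that $L_\ell^n$ is indeed a lattice of rank $mn$ (apply Lemma~\ref{Lem:L_I} to the ideal $\ip^\ell$, and note that an $n$-fold Cartesian product of a rank-$m$ lattice has rank $mn$), that its determinant factors as $\det L_\ell^n=(\det L_\ell)^n$, and that its minimum Euclidean distance is $d_E(L_\ell^n)=d_E(L_\ell)$ --- all of which were already noted in the opening of Section~\ref{main result}. In particular $d_E^2(\LP)=d_E^2(L_\ell)$, so the hypothesis $d_{C_i}\geqslant\left\lceil d_E^2(L_\ell)/d_E^2(L_i)\right\rceil$ of the corollary is precisely the hypothesis $d_{C_i}\geqslant\left\lceil d_E^2(\LP)/d_E^2(L_i)\right\rceil$ of Proposition~\ref{Strong}; since each $C_i$ also has alphabet set $S_i$ and contains the zero codeword, Proposition~\ref{Strong} applies and gives
\[
  \Delta\!\left(\tau(\mathcal{C})+L_\ell^n\right)\ \geqslant\ \Delta(L_\ell^n)\cdot\prod_{i=0}^{\ell-1}M_i.
\]

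Next I would evaluate $\Delta(L_\ell^n)$ via the lattice density formula $\Delta(L)=(d_E(L)/2)^N V_N/\det(L)$ from the preliminaries, with $N=mn$. Since $\mathcal{N}(\ip^\ell)=\mathcal{N}(\ip)^\ell=q^\ell$, Lemma~\ref{Lem:L_I} gives $\det L_\ell=q^\ell\sqrt{|D_K|}$, hence $\det L_\ell^n=\bigl(q^\ell\sqrt{|D_K|}\bigr)^n$. Substituting this together with $d_E(L_\ell^n)=d_E(L_\ell)$ yields
\[
  \Delta(L_\ell^n)\ =\ \frac{\left(d_E(L_\ell)/2\right)^{mn}V_{mn}}{\left(q^\ell\sqrt{|D_K|}\right)^{n}},
\]
and combining with the previous display produces the stated lower bound on $\Delta\!\left(\tau(\mathcal{C})+L_\ell^n\right)$.

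The center-density statement is then pure bookkeeping: dividing the bound by $V_{mn}$ gives $\delta=\delta(\tau(\mathcal{C})+L_\ell^n)\geqslant\left(d_E(L_\ell)/2\right)^{mn}\bigl(q^\ell\sqrt{|D_K|}\bigr)^{-n}\prod_i M_i$, and taking $\log_2$, using $\log_2\!\left(d_E(L_\ell)/2\right)^{mn}=mn\log_2 d_E(L_\ell)-mn$ and $\log_2\bigl(q^\ell\sqrt{|D_K|}\bigr)^{n}=n\ell\log_2 q+\frac n2\log_2|D_K|$, delivers
\[
  \log_2\delta\ \geqslant\ mn\log_2 d_E(L_\ell)-mn-n\ell\log_2 q-\frac n2\log_2|D_K|+\sum_{i=0}^{\ell-1}\log_2 M_i.
\]
For the final assertion, in the linear case $M_i=q^{k_i}$ with $k_i=\log_q M_i$, so $\log_2 M_i=k_i\log_2 q$ and $\sum_i\log_2 M_i=\log_2 q\cdot\sum_i k_i$, which substitutes into the previous line.

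There is no genuine difficulty here --- the corollary is a direct consequence of Proposition~\ref{Strong} and Lemma~\ref{Lem:L_I}. The only point demanding a little care is confirming that $\LP=L_\ell^n$ is a legitimate input to Proposition~\ref{Strong}: one must use that $L_\ell^n$ is a lattice of rank $mn$ whose minimum Euclidean distance equals that of $L_\ell$, so that the distance condition imposed on the $d_{C_i}$ in the corollary really is the one required by the proposition; after that, everything is a matter of tracking logarithms.
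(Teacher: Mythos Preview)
Your proposal is correct and follows exactly the paper's approach: the paper's proof consists only of the observation that $\mathcal{N}(\ip^\ell)=q^\ell$ so that $\det L_\ell^n=(q^\ell\sqrt{|D_K|})^n$ by Lemma~\ref{Lem:L_I}, with the appeal to Proposition~\ref{Strong} left implicit. You have simply spelled out the remaining bookkeeping (the lattice density formula and the logarithms) that the paper leaves to the reader.
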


\begin{proof}
  Note that $\mathcal{N}(\ip^\ell)=q^\ell$ and then by Lemma \ref{Lem:L_I} $$\det L_{\ell}^n=\left(\det L_{\ell}\right)^n=\left(q^\ell\sqrt{|D_K|}\right)^n.$$
\end{proof}

\begin{remark}
  For concatenation with finite codes, we care more about the dimension of the packing as we need to compare the new construction with the known good packings in the Euclidean space with the same dimension. Hence we fix the length $n$ of the codes first. As for $0\leqslant i\leqslant \ell-1$, the lattice $L_i$ is a sublattice of $L_0$, we have $d_E(L_i)\geqslant d_E(L_0)$. Thus the length $n$ satisfies $n\geqslant \left\lceil\dfrac{d_E^2(L_\ell)}{d_E^2(L_0)}\right\rceil\geqslant \left\lceil q^{2\ell/m}\right\rceil$ by Eq.~\eqref{eqn:minimum of L0} and Lemma \ref{normofideal}. Therefore, we can concatenate at most $\ell=\left\lfloor\dfrac m2\log_q n\right\rfloor$ codes to $L_{\ell}^n$. This bound will be used in the computation of Subsection \ref{examples}.
\end{remark}

\subsection{Asymptotic Properties}

In this subsection, we show that our construction will lead to asymptotically good packing families, which means we can obtain several constructive bounds on the asymptotic density exponent Eq.~\eqref{eqn:asymptotic density bound}.

We will employ families of linear codes that meet the GV bound. As there exist only exponential time algorithms or randomized polynomial algorithms to construct such families (see \cite[Section 17.7]{macwilliams1977theory}), our corresponding asymptotic density exponent bounds are exponential constructive bounds.

From Eq.~\eqref{eqn:upper bound of lattice niminum} and Lemma \ref{normofideal}, for $0\leqslant i\leqslant \ell-1$, we know
\begin{eqnarray}\label{eqn:the upper bound of hamming weight}
  \dfrac{d_E^2(L_\ell)}{d_E^2(L_i)} \leqslant q^{2(\ell-i)/m}|D_K|^{1/m}.
\end{eqnarray}

Different from the finite concatenation in Subsection \ref{subsec: family of codes}, for asymptotic results, we first fix $\ell$ and construct the packing for each $\ell\in\N_{\geqslant1}$. The length $n_\ell$ of the codes is set depending on $\ell$.

 We uniformly set $n_\ell=\left\lceil q^{2\ell/m}|D_K|^{1/m}\right\rceil$. Then by Eq.~\eqref{eqn:the upper bound of hamming weight}, $n_\ell\geqslant \left\lceil \dfrac{d_E^2(L_\ell)}{d_E^2(L_i)}\right\rceil$ for all $0\leqslant i \leqslant \ell-1$. Based on the GV bound Eq.~\eqref{eqn:GV bound}, for $0\leqslant i\leqslant\ell-1$, we can choose $q$-ary \begin{eqnarray*}
  \left[n_\ell,k^{(n_\ell)}_i,\left\lceil \dfrac{d_E^2(L_\ell)}{d_E^2(L_i)}\right\rceil\right]
\end{eqnarray*} linear code $C^{(n_\ell)}_{i}$ such that
the rate is $$\dfrac{k^{(n_\ell)}_i}{n_\ell}\geqslant R_{GV}(q,\varrho^{(n_\ell)}_i)=1-H_q(\varrho^{(n_\ell)}_i),$$where the relative minimum distance satisfies $$\varrho^{(n_\ell)}_i=\dfrac{\left\lceil \dfrac{d_E^2(L_\ell)}{d_E^2(L_i)}\right\rceil}{n_\ell}
.$$

Let $\mathcal{C}^{(n_\ell)}:=\left\{C_i^{(n_\ell)}\right\}_{i=0}^{\ell-1}$. For each $n_\ell$, define a packing \begin{eqnarray*}
  \LP^{(n_\ell)}:=\tau(\mathcal{C}^{(n_\ell)})+L_{\ell}^{n_\ell}
\end{eqnarray*}as in Eq.~\eqref{def:packing concatenate family}.

 Next we take $\ell$ increasingly to $\infty$. The following proposition describes the asymptotic density exponent of the packing family $\mathcal{F}=\left\{\LP^{(n_\ell)}\right\}_{\ell\rightarrow \infty}$, where $n_\ell=\left\lceil q^{2\ell/m}|D_K|^{1/m}\right\rceil$ tends to $\infty$ as $\ell$ goes to $\infty$.

\begin{proposition}\label{prop:asymptotic result}
 The asymptotic density exponent of the family $\mathcal{F}$ satisfies
 \begin{eqnarray}\label{eqn:asymptotic density of concatenation family}
   \lambda(\mathcal{F})&\geqslant& -1-\frac {1}{2m}\log_2 |D_K|-\frac 12\log_2 \frac{m}{2\pi e}\nonumber\\
  &&\quad +\limsup_{\ell\rightarrow \infty}\left(\log_2 d_E(L_{\ell})-\frac 12\log_2 n_\ell -\frac {\log_2 q}{m}\sum_{i=0}^{\ell-1}H'_q\left(\varrho_i^{(n_\ell)}\right)\right),
 \end{eqnarray}
 where $H_q'(\varrho)=H_q(\varrho)$ for $0<\varrho<\dfrac {q-1}{q}$ and $H_q'(\varrho)=1$ for $\dfrac{q-1}{q}\leqslant \varrho\leqslant 1$.
\end{proposition}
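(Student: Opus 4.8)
The plan is to substitute the explicit center-density estimate of Corollary~\ref{coro:LP=L_l} into the definition~\eqref{eqn:asymptotic density bound} of the asymptotic density exponent and then control each resulting term as $\ell\to\infty$. By the choice of $n_\ell=\lceil q^{2\ell/m}|D_K|^{1/m}\rceil$ together with~\eqref{eqn:the upper bound of hamming weight}, one has $n_\ell\geqslant\lceil d_E^2(L_\ell)/d_E^2(L_i)\rceil$ for every $0\leqslant i\leqslant\ell-1$, so the codes $C_i^{(n_\ell)}$ exist and Corollary~\ref{coro:LP=L_l} applies to $\LP^{(n_\ell)}=\tau(\mathcal{C}^{(n_\ell)})+L_\ell^{n_\ell}$. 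Its linear-code form gives, after dividing by $mn_\ell$,
\[\frac{1}{mn_\ell}\log_2\delta\bigl(\LP^{(n_\ell)}\bigr)\geqslant\log_2 d_E(L_\ell)-1-\frac{\ell}{m}\log_2 q-\frac{1}{2m}\log_2|D_K|+\frac{\log_2 q}{mn_\ell}\sum_{i=0}^{\ell-1}k_i^{(n_\ell)}.\]
Since $\Delta=\delta\cdot V_{mn_\ell}$, it then remains to add $\tfrac{1}{mn_\ell}\log_2 V_{mn_\ell}$, bound the code term, and pass to $\limsup_{\ell\to\infty}$.

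First I would treat the volume term using the Stirling expansion~\eqref{Stirling} with $N=mn_\ell$: it yields $\tfrac{1}{N}\log_2 V_N=-\tfrac12\log_2\tfrac{N}{2\pi e}-\tfrac{1}{2N}\log_2(N\pi)-\tfrac{\epsilon}{N}$ with $0<\epsilon<(\log_2 e)/(6N)$. As $n_\ell\to\infty$, the last two summands are $o(1)$, while $-\tfrac12\log_2\tfrac{mn_\ell}{2\pi e}=-\tfrac12\log_2\tfrac{m}{2\pi e}-\tfrac12\log_2 n_\ell$ produces the constant $-\tfrac12\log_2\tfrac{m}{2\pi e}$ and the term $-\tfrac12\log_2 n_\ell$ appearing in~\eqref{eqn:asymptotic density of concatenation family}. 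Next I would lower-bound the code contribution. For each $i$ with $\varrho_i^{(n_\ell)}<(q-1)/q$ the Gilbert--Varshamov bound~\eqref{eqn:GV bound} gives $k_i^{(n_\ell)}/n_\ell\geqslant 1-H_q(\varrho_i^{(n_\ell)})$, whereas for $\varrho_i^{(n_\ell)}\geqslant(q-1)/q$ one only uses the trivial $k_i^{(n_\ell)}\geqslant 0$; in both regimes $k_i^{(n_\ell)}/n_\ell\geqslant 1-H_q'(\varrho_i^{(n_\ell)})$, which is exactly the reason the truncated entropy $H_q'$ enters. Therefore $\tfrac{\log_2 q}{mn_\ell}\sum_{i=0}^{\ell-1}k_i^{(n_\ell)}\geqslant\tfrac{\ell\log_2 q}{m}-\tfrac{\log_2 q}{m}\sum_{i=0}^{\ell-1}H_q'(\varrho_i^{(n_\ell)})$.

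Assembling the three pieces, the contributions $-\tfrac{\ell}{m}\log_2 q$ and $+\tfrac{\ell\log_2 q}{m}$ cancel, so
\[\frac{1}{mn_\ell}\log_2\Delta\bigl(\LP^{(n_\ell)}\bigr)\geqslant-1-\frac{1}{2m}\log_2|D_K|-\frac12\log_2\frac{m}{2\pi e}+\log_2 d_E(L_\ell)-\frac12\log_2 n_\ell-\frac{\log_2 q}{m}\sum_{i=0}^{\ell-1}H_q'\bigl(\varrho_i^{(n_\ell)}\bigr)-o(1),\]
and taking $\limsup_{\ell\to\infty}$ (the constant terms pass through unchanged and $-o(1)$ is harmless) gives precisely~\eqref{eqn:asymptotic density of concatenation family}. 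I expect the only delicate points to be bookkeeping: confirming that the subleading Stirling terms are genuinely $o(1)$, which relies on $n_\ell\to\infty$ as guaranteed by the choice of $n_\ell$; and handling the range $\varrho_i^{(n_\ell)}\geqslant(q-1)/q$, where the Gilbert--Varshamov bound is vacuous, uniformly through the single estimate $k_i^{(n_\ell)}/n_\ell\geqslant 1-H_q'(\varrho_i^{(n_\ell)})=\max\{0,\,1-H_q(\varrho_i^{(n_\ell)})\}$.
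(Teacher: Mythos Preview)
Your proposal is correct and follows essentially the same route as the paper: apply Corollary~\ref{coro:LP=L_l} to $\LP^{(n_\ell)}$, use Stirling's formula~\eqref{Stirling} for the volume term, bound the code rates via the Gilbert--Varshamov inequality (with the truncation $H_q'$ to absorb the vacuous range $\varrho\geqslant(q-1)/q$), and cancel the $\pm\tfrac{\ell}{m}\log_2 q$ contributions before taking the $\limsup$. Your explicit justification for why $H_q'$ rather than $H_q$ appears is in fact a bit more transparent than the paper's version, but the argument is the same.
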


\begin{proof}

By Corollary \ref{coro:LP=L_l},
  \begin{eqnarray*}
   &&\lambda(\mathcal{F})\geqslant \limsup_{\ell\rightarrow \infty}\frac {1}{mn_\ell}\log_2\left(\dfrac{\left(d_E(L_{\ell})/2\right)^{mn_\ell}V_{mn_\ell}}{\left(q^{\ell}\sqrt{|D_K|}\right)^{n_\ell}}\cdot\prod_{i=0}^{\ell-1}M_i^{(n_\ell)}\right)\\
   &\ &=-1-\frac {1}{2m}\log_2 |D_K|+ \\
   &&\ +\limsup_{\ell\rightarrow \infty}\left(\log_2 d_E(L_{\ell})-\frac{\ell} {m}\log_2 q+\frac {1}{mn_\ell}\log_2 V_{mn_\ell}+\sum_{i=0}^{\ell-1}\frac{\log_2 q\cdot k_i^{(n_\ell)}}{mn_\ell}\right).
  \end{eqnarray*}
By Eq.~\eqref{Stirling},
\begin{eqnarray*}
  &&\limsup_{\ell\rightarrow \infty}\left(\frac {1}{mn_\ell}\log_2 V_{mn_\ell}+\sum_{i=0}^{\ell-1}\frac{\log_2 q\cdot k_i^{(n_\ell)}}{mn_\ell}\right)\\
  &=&\limsup_{\ell\rightarrow \infty}\left(-\frac 12\log_2\frac{mn_\ell}{2\pi e}-\frac{1}{2mn_\ell}\log_2 mn_\ell\pi+\dfrac{\log_2 q}{m}\sum_{i=0}^{\ell-1}\frac{ k_i^{(n_\ell)}}{n_\ell}\right)\\
  &\geqslant&-\frac 12\log_2 \frac{m}{2\pi e}+\limsup_{\ell\rightarrow \infty}\left(-\frac 12\log_2 n_\ell+\frac {\log_2 q}{m}\sum_{i=0}^{\ell-1}R_{GV}\left(q,\varrho_i^{(n_\ell)}\right)\right)\\
   &=&-\frac 12\log_2 \frac{m}{2\pi e}+\limsup_{\ell\rightarrow \infty}\left( -\frac 12\log_2 n_\ell+\frac {\log_2 q}{m}\sum_{i=0}^{\ell-1}\left(1-H'_q\left(\varrho_i^{(n_\ell)}\right)\right)\right)\\
  &=&-\frac 12\log_2 \frac{m}{2\pi e}+\limsup_{\ell\rightarrow \infty}\left(\frac {\ell} m\log_2 q-\frac 12\log_2 n_\ell-\frac {\log_2 q}{m}\sum_{i=0}^{\ell-1}H'_q\left(\varrho_i^{(n_\ell)}\right)\right).
\end{eqnarray*}
In summary, we get Eq.~\eqref{eqn:asymptotic density of concatenation family}.
\end{proof}

\subsection{Examples}\label{examples}

We use some explicit examples to illustrate our new construction. Here \Magma{} V2.21-4 \cite{MS,MR1484478} will be employed to obtain the numerical results.

For the finite concatenation, we use the linear codes from \cite{Grassl:codetables} which have an explicit construction. For the asymptotic result, we set sufficiently large $\ell$ to approximate the limit.

\begin{example}\label{example1}
   Consider the number field $K=\packQ[\alpha]$, where $\alpha$ is a root of the irreducible polynomial $$f(x)=x^4-x^3-x^2+x+1\in\packQ[x].$$ $K/\packQ$ is a totally complex number field with extension degree $[K:\packQ]=4$ and absolute discriminant $|D_K|=117$. The absolute discriminant is the smallest one of all totally complex number fields with degree $4$ (see \cite{Odlyzko1990}).
\begin{enumerate}[(i)]
  \item We consider a prime ideal $\ip$ lying over $3\in\Z$. The \Magma{} code of this example is listed in Appendix \ref{magma code}, while for other examples, the \Magma{} code can be modified from this template.

      The key outputs of Appendix \ref{magma code} are listed here:
       \begin{verbatim}
   The degree of K=Q[x]/(x^4 - x^3 - x^2 + x + 1) is m=4;
   The absolute value of the discriminant of K is |d|=117;
   P is a Prime Ideal of O
   Two element generators:
   [3, 0, 0, 0]
   [2, 1, 1, 0] lying over 3 with norm q=9;
       \end{verbatim}
      The above statement means $\ip=(3,2+\alpha+\alpha^2)$.
      \begin{verbatim}
   Finite Concatenation:
   We can concatenate at most 3 linear codes to the lattice
    constructed by 64 copies of L_{P^3},
   whose Hamming weights are required respectively at least
   27;9;3;
   \end{verbatim}

   Referring to \cite{Grassl:codetables}, as the norm of $\ip$ is $9$, we can use the existing $9$-ary linear codes $C_0, C_1, C_2$ with parameters
      \begin{eqnarray*}
         \left[64,25,27\right],\
        \left[64,49,9\right],\
        \left[64,61,3\right]
      \end{eqnarray*}respectively.
     The sum of the dimensions is $135$. Considering the concatenation in Corollary \ref{coro:LP=L_l}, we obtain a packing with dimension $4\times 64=256$, whose center density $\delta$ satisfies
   \begin{verbatim}
   Our packing is in dimension 256 with Log_2(center density)
    at least  208.088204168043224246772217517
   \end{verbatim}
   Note that our packing is denser than the Barnes-Wall lattice BW$_{256}$, whose density is listed in the table of dense sphere packings \cite[Table 1.3]{sloane1999sphere}.

   Using this ideal to construct a packing family as in Proposition \ref{prop:asymptotic result}, we get the following result.
\begin{verbatim}
   Asymptotic result:
   The asymptotic density exponent of the packing family is
    at least -1.442426720
      \end{verbatim}
 Note that the above result means our packing family is asymptotically good.
  \item For this field, we similarly analyze other prime ideals and list part of our numerical results in Table~\ref{table}, where $\delta$ is the center density and $\lambda$ is the asymptotic density of the corresponding packing family.

     \begin{table}[htbp]
     \centering
\caption{Examples constructed from $\packQ[x]/\left(x^4-x^3-x^2+x+1\right)$}\label{table}
\begin{threeparttable}

\begin{tabular}{|c|c|c|c|c|}

\hline
 $\ip$& $q$&dimension & $\log_2\delta$ at least &$\lambda$ at least\\
 \hline
 \multirow{3}{*}{$(3,2+\alpha+\alpha^2)$}&\multirow{3}{*}{$9$}&$180$&$108.52$&\multirow{3}{*}{$-1.442$}\\

&&$256$&$208.09\tnote{1}$&\\
&&$512$&$590.52$&\\
\hline
\multirow{2}{*}{$(7,4+\alpha)$}&\multirow{2}{*}{$7$}&$256$&$190.63$&\multirow{2}{*}{$-1.453$}\\
&&$400$&$410.15$&\\
\hline
\end{tabular}
\begin{tablenotes}
        \footnotesize
        \item[1] The packing can be explicitly constructed with $\log_2 \delta$ larger than $192$ given by the Barnes-Wall lattice BW$_{256}$.
      \end{tablenotes}
      \end{threeparttable}

\end{table}

\end{enumerate}

\end{example}

\begin{example}
  Consider the number field $K=\packQ[\alpha]$, where $\alpha$ is a root of the irreducible polynomial $$f(x)=x^3+x^2-2x-1\in\packQ[x].$$ $K/\packQ$ is a totally real number field with extension degree $[K:\packQ]=3$ and absolute discriminant $|D_K|=49$. The absolute discriminant is the smallest one of all totally real cubic number fields (see \cite{Odlyzko1990}). The numerical results on our examples are partially listed in Table \ref{table2}.
  \begin{table}[htbp]
  \centering
\caption{Examples constructed from $\packQ[x]/\left(x^3+x^2-2x-1\right)$}\label{table2}
\begin{threeparttable}
\begin{tabular}{|c|c|c|c|c|}

\hline
 $\ip$& $q$&dimension & $\log_2\delta$ at least &$\lambda$ at least\\
 \hline
{$(2)$}&{$8$}&$255$&$134.46$&{$-1.628$}\\
\hline
\multirow{2}{*}{$(7,5+\alpha)$}&\multirow{2}{*}{$7$}&$192$&$83.68$&\multirow{2}{*}{$-1.585$}\\
&&$255$&$157.63$&\\
\hline
\end{tabular}
      \end{threeparttable}

\end{table}

\end{example}
\begin{example}
  Consider the number field $K=\packQ[\alpha]$, where $\alpha$ is a root of the irreducible polynomial $$f(x)=x^3+x^2-1\in\packQ[x].$$ $K/\packQ$ is a number field with extension degree $[K:\packQ]=3$ and absolute discriminant $|D_K|=23$. The absolute discriminant is the smallest one of all cubic number fields (see \cite{Odlyzko1990}). The numerical results on our examples are partially listed in Table \ref{table3}.
  \begin{table}[htbp]
  \centering
\caption{Examples constructed from $\packQ[x]/\left(x^3+x^2-1\right)$}\label{table3}
\begin{threeparttable}
\begin{tabular}{|c|c|c|c|c|}

\hline
 $\ip$& $q$&dimension & $\log_2\delta$ at least &$\lambda$ at least\\
 \hline
 \multirow{2}{*}{$(2)$}&\multirow{2}{*}{$8$}&$96$&$24.70$&\multirow{2}{*}{$-1.429$}\\

&&$192$&$115.40$&\\
\hline
\multirow{2}{*}{$(5,2+\alpha)$}&\multirow{2}{*}{$5$}&$150$&$69.47$&\multirow{2}{*}{$-1.445$}\\
&&$180$&$101.01$&\\
\hline
{$(7,11+\alpha)$}&{$7$}&$255$&$187.32$&{$-1.430$}
\\
\hline
\end{tabular}

      \end{threeparttable}

\end{table}

\end{example}

\begin{example}\label{lastexample}
  Consider the number field $K=\packQ[\alpha]$, where $\alpha$ is a root of the irreducible polynomial $$f(x)=x^6+x^3+1\in\packQ[x].$$ $K/\packQ$ is a number field with extension degree $[K:\packQ]=6$ and absolute discriminant $|D_K|=19683$. The polynomial $f(x)=x^6+x^3+1$ is the $9$th cyclotomic polynomial over $\packQ$. The numerical results on our examples are partially listed in Table \ref{table4}.
  \begin{table}[htbp]
  \centering
\caption{Examples constructed from $\packQ[x]/\left(x^6+x^3+1\right)$}\label{table4}
\begin{threeparttable}
\begin{tabular}{|c|c|c|c|c|}

\hline
 $\ip$& $q$&dimension & $\log_2\delta$ at least &$\lambda$ at least\\
 \hline
\multirow{2}{*}{$(3,2+\alpha)$}&\multirow{2}{*}{$3$}&$180$&$109.71$&\multirow{2}{*}{$-1.868$}\\
&&$192$&$122.72$&\\
\hline

\end{tabular}
      \end{threeparttable}

\end{table}

\end{example}

\section{Conclusion}\label{conclusion}

This paper provides a new method to construct dense packings using canonical $\packQ$-embedding of algebraic number fields, and special codes over the residue field of some prime ideals.
With the help of \Magma{} V2.21-4, several explicit constructions were provided in Examples \ref{example1} - \ref{lastexample}. Especially, in $\R^{256}$, a packing denser than the Barnes-Wall lattice BW$_{256}$ was obtained. Moreover, this method can be utilized to construct asymptotically good packing families. For each number field and prime ideal in Tables \ref{table} - \ref{table4}, a lower bound on the asymptotic density exponent of the corresponding packing family was provided.

\section*{Acknowledgements}{

Financially, the accomplishment of the first version had been partially supported by Nanyang Technological University under NTU Research Scholarship, when the author was a PhD candidate, and partially supported by Yujie Nan when the author was unemployed till he joined RMI, NUS.

The author sincerely thanks his PhD supervisors, San Ling and Chaoping Xing, for introducing him to this topic, especially for the invaluable suggestions and comments from Chaoping Xing which make the author's initial idea become mature.
}
\appendix
\section{\Magma\ Code for Example \ref{example1}}\label{magma code}
{\footnotesize
 \begin{verbatim}
  R:=RealField(10); Q:=RationalField();
  W<x>:=PolynomialRing(Q); f:=x^4-x^3-x^2+x+1;
              /*Input a polynomial over rational number field Q*/
  if IsIrreducible(f) then   /* Test whether f is irreducible*/
   K<a>:=NumberField([f]); O:=MaximalOrder(K); m:=AbsoluteDegree(K);
   printf"The degree of K=Q[x]/(%o) is m=%o;\n",f,m;
   d:=AbsoluteDiscriminant(K);
   printf"The absolute value of the discriminant of K \
    is |d|=%o;\n",d;

   p:=3;     /*Choose p=3 as a base prime number*/
   J:=Decomposition(O,p); P:=J[1,1]; q:=Norm(P);
   printf"P is a %o lying over %o with norm q=%o;\n",P,p,q;

   printf"\nFinite Concatenation:\n";
   n:=64; l:=Floor(m/2*Log(n)/Log(q));
          /*Set the length of the codes*/
   L:=MinkowskiLattice(P^l); b:=Minimum(L);
   printf"We can concatenate at most %o linear codes to \
    the lattice constructed by %o copies of L_{P^%o},\n \
    whose Hamming weights are required respectively at least \
    \n",l,n,l;

   for t in [0..l-1] do
     Z:=MinkowskiLattice(P^t); h:=Ceiling(Minimum(L)/Minimum(Z));
     printf"%o;",h;
   end for;
   printf"\n";

   printf"Refer to the Codetable.de to get the largest dimension \
    of the corresponding linear codes with length %o; \n",n;
   fromtable:=135;
   printf"Here the sum of dimensions T=%o;\n",fromtable;
           /*For this example, the sum of the dimensions is 135*/
   c:=(m*n*Log(b^0.5)-n*l*Log(q)-0.5*n*Log(d))/Log(2)-m*n;
   printf"Our packing is in dimension %o with \
    Log_2(center density) at least\n %o\n",n*m,\
    c+Log(q)/Log(2)*fromtable;

   printf"\nAsymptotic result:\n";
   l:=1000; n:=Ceiling(q^(2*l/m)*d^(1/m));
           /*Set l sufficiently large to approximate the limit*/
   L:=MinkowskiLattice(P^l);b:=Minimum(L);
   Sum:=0;
   for t in [l-1..0 by -1] do
     Z:=MinkowskiLattice(P^t);
     g:=Ceiling(Minimum(L)/Minimum(Z));
     varrho:=g/n;
      if varrho le (q-1)/q then
        Sum:=Sum+(varrho*Log(q-1)-varrho*Log(varrho)\
        -(1-varrho)*Log(1-varrho))/Log(q);
      else Sum:=Sum+1;
      end if;
   end for;

   Lambda:=-1-1/(2*m)*Log(d)/Log(2)-0.5*Log(m/(2*Pi(R)*Exp(1))) \
    /Log(2)+Log(b^0.5)/Log(2)-0.5*Log(n)/Log(2) \
    -1/m*Log(q)/Log(2)*Sum;
   printf"The asymptotic density exponent of the packing family is \
   \n at least %o",Lambda;

  else
  printf"The polynomial %o is not irreducible over Q.\n",f;
  end if;

\end{verbatim}
}


\end{document}